\theoremstyle{plain}% default 
\newtheorem{theorem}{Theorem}[section]
\newtheorem{lemma}[theorem]{Lemma}
\newtheorem{corollary}[theorem]{Corollary}
\newtheorem{conjecture}[theorem]{Conjecture}
\theoremstyle{definition}
\theoremstyle{definition}
\theoremstyle{remark} 
\newtheorem{remark}[theorem]{Remark}
\numberwithin{equation}{section}
\numberwithin{figure}{section}
\begin{document}

\title[Long neck principle and spectral width inequality]{A note on the long neck principle and spectral width inequality of geodesic collar neighborhoods} 
\author[D. Liu]{Daoqiang Liu}
\address{School of Mathematical Sciences\\
     Capital Normal University\\
     100048, Beijing\\
     China}
\email{\href{mailto:dqliumath@cnu.edu.cn}{dqliumath@cnu.edu.cn}}
\urladdr{\href{https://www.dqliu.cn}{www.dqliu.cn}}
\thanks{}

\subjclass[2020]{Primary 53C21, 53C24, 53C23}
\keywords{Callias operator, relative Gromov-Lawson pair, $c$-spectral}
\date{}

\begin{abstract}
 The main purpose of this short note is to derive some generalizations of the long neck principle and give a spectral width inequality of geodesic collar neighborhoods. Our results are obtained via the spinorial Callias operator approach. An important step is to introduce the relative Gromov-Lawson pair on a compact manifold with boundary, relative to a background manifold. 
\end{abstract}

\maketitle

\section{Introduction}\label{intro}
The topology and geometry related to the scalar curvature have been a significant theme in Riemannian geometry. In addition to the minimal surface method developed by Schoen and Yau \cite{schoen1979onthe}, a tool to study metrics of positive scalar curvature is the index theory of Dirac operators in the spin setting. Lichnerowicz \cite{lichnerowicz1963spineurs} showed that on a closed spin manifold with positive scalar curvature, Dirac operator is invertible and hence its index must vanish. Using the Atiyah-Singer index theorem \cite{atiyah1963index}, $\widehat{A}$-genus must also vanish. Gromov and Lawson \cite{gromov1983positive} proved that a variety of so-called enlargeable manifolds do not admit any metric of positive scalar curvature by their relative index theorem for complete manifolds. Additionally, the sphere rigidity theorem of Llarull \cite{llarull1998sharp} illustrates the power of Dirac operators for compact Riemannian spin manifolds, which states that for a compact $m$-dimensional Riemannian spin manifold $(M,g_M)$ with ${\rm scal}_M\geq m(m-1)$ any area non-increasing map $\Phi: M\to S^m$ \footnote{We denote by $S^m$ the unit $m$-sphere equipped with its standard Riemannian metric.} of non-zero degree is a Riemannian isometry. Gromov proposed the following conjecture, which can be considered as an extension of Llarull's theorem to compact manifolds with boundary.
 
\begin{conjecture}[{\cite[p. 87, Long neck problem]{gromov2019four}}]\label{conj:long-neck-problem}
Let $(M,g_M)$ be a compact $m$\nobreakdash-dimensional Riemannian manifold with boundary with ${\rm scal}_M\geq m(m-1)$. Suppose that $\Phi:M\to S^m$ is a smooth area non-increasing map which is locally constant near the boundary. If the distance with respect to the metric $g_M$
\begin{equation}\label{eq:the-neck-length-inequality}
{\rm dist}_{g_M}({\rm supp}({\rm d}\Phi),\partial M)\geq \frac{\pi}{m},
\end{equation}
then ${\rm deg}(\Phi)=0$.
\end{conjecture}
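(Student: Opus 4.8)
Since the conclusion is index-theoretic, the plan is to argue by contradiction through a twisted spinorial Callias operator. The Dirac method needs $M$ to be spin, so assume this (the general case would require the $\mu$-bubble technique); suppose $\deg(\Phi)\neq 0$. The aim is to construct a Fredholm Callias operator on $M$ whose index is forced to be nonzero by $\deg(\Phi)$, yet which is invertible because of the curvature bound together with the neck-length hypothesis --- a contradiction.

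\textbf{Step 1: the relative Gromov--Lawson pair and its index.} Pull back the spinor bundle $\Sigma_{S^m}$ of the round unit sphere along $\Phi$, obtaining a Hermitian bundle with metric connection $E:=\Phi^{*}\Sigma_{S^m}$ on $M$. Because $\Phi$ is locally constant near $\partial M$, this connection is flat and $E$ is canonically trivialized on a neighborhood of $\partial M$; hence, for $m$ even, the graded pair $(\Phi^{*}\Sigma_{S^m}^{+},\Phi^{*}\Sigma_{S^m}^{-})$ is a Gromov--Lawson pair on $M$ relative to the background sphere $S^m$, with relative Chern character compactly supported in the interior. The relative index theorem applied to the twisted Dirac operator $D$ on $\Sigma_M\otimes E$ then identifies the associated index with $\langle\operatorname{ch}\!\big(\Phi^{*}(\Sigma_{S^m}^{+}-\Sigma_{S^m}^{-})\big)\,\widehat{A}(M),[M,\partial M]\rangle$, which equals a nonzero universal multiple of $\deg(\Phi)$ (for $m$ odd one first suspends to $S^{m+1}$, or uses the odd Callias index). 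In particular this index is nonzero.

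\textbf{Step 2: the Callias potential on the collar and the Weitzenböck contradiction.} Set $\ell:=\operatorname{dist}_{g_M}(\operatorname{supp}(\mathrm{d}\Phi),\partial M)$; the hypothesis is $\ell\geq\pi/m$. The collar $U:=\{x:\operatorname{dist}_{g_M}(x,\partial M)<\ell\}$ is disjoint from $\operatorname{supp}(\mathrm{d}\Phi)$, so $\mathrm{d}\Phi\equiv 0$ and $E$ is flat on $U$. Attach a half-cylinder $\partial M\times[0,\infty)$, extending $g_M$ and $E$ as products, and choose a smooth $f$ on the resulting manifold that vanishes on $M\setminus U$, depends only on $t:=\operatorname{dist}(\cdot,\partial M)$ on $U$, and tends to $+\infty$ along the cylinder; form the Callias operator $B:=D+f\,\mathcal A$, where $\mathcal A$ is an odd self-adjoint bundle involution anticommuting with Clifford multiplication. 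Then $B$ is Fredholm, and its Callias index equals the relative index of Step 1, hence is nonzero. On the other hand, Lichnerowicz--Weitzenböck gives
\[
B^{2}=\nabla^{*}\nabla+\tfrac14\,\mathrm{scal}_M+\mathcal R^{E}+f^{2}+\mathrm c(\mathrm{d}f)\,\mathcal A.
\]
On $\operatorname{supp}(\mathrm{d}\Phi)$ the potential terms vanish and Llarull's pointwise estimate for area non-increasing maps gives $\tfrac14\,\mathrm{scal}_M+\mathcal R^{E}\geq\tfrac14(\mathrm{scal}_M-m(m-1))\geq 0$; on $U$ and on the cylinder $\mathcal R^{E}=0$ and $f$ is chosen so that $\tfrac14\,\mathrm{scal}_M+f^{2}-|\mathrm{d}f|>0$. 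Thus $B^{2}\geq 0$ everywhere and $B^{2}>0$ off $\operatorname{supp}(\mathrm{d}\Phi)$, so unique continuation gives $\ker B=0$ and $\operatorname{ind}B=0$, contradicting Step 1 and forcing $\deg(\Phi)=0$.

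\textbf{The main obstacle.} The hard part is realizing the \emph{sharp} threshold $\pi/m$. A naive choice of $f$ on the collar, required to solve $|\mathrm{d}f|<f^{2}+\tfrac14\,\mathrm{scal}_M$ while blowing up as $t\to 0$ over an interval of length $\ell$, only reaches the weaker bound $\ell>\pi/\sqrt{m(m-1)}$ by an arctangent comparison; to gain the factor $\sqrt{(m-1)/m}$ one must sharpen the Weitzenböck estimate on the collar, which seems to demand exploiting its product-like geometry --- e.g.\ a warped-product comparison with a piece of the round cylinder $S^{m-1}\times\mathbb R$, or an extra twist effectively lowering the dimension by one. This is precisely where current techniques, and the relative Gromov--Lawson pair formalism, must be pushed. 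The residual difficulties are the equality case $\mathrm{scal}_M\equiv m(m-1)$ with $\ell=\pi/m$, where only $B^{2}\geq 0$ holds and harmonic spinors must be excluded by a rigidity argument identifying $M$ with a spherical cap, and the removal of the spin assumption.
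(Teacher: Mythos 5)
The statement you chose is Conjecture~\ref{conj:long-neck-problem}, and the paper contains no proof of it: it is recorded as an open problem, with the only settled cases being those imported from the literature (Theorems~\ref{thm:cecchini2020longneck} and~\ref{thm:CZ-result}); in particular, for $m$ even and $M$ spin the conjecture follows from Theorem~\ref{thm:CZ-result} simply by letting $\ell\to\pi/m$, because $H$ is bounded below on the compact boundary while $-\tan(m\ell/2)\to-\infty$. So there is no proof in the paper to compare against, and your attempt is not a complete proof either --- as you yourself concede in your closing paragraph.

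The concrete gap is exactly the one you name but do not fill. Your Step~2, with a cylinder attached and the rough Weitzenb\"ock estimate (discarding $|\nabla u|^2$ and demanding $|\mathrm{d}f|<f^2+\tfrac14{\rm scal}_M$ on the collar), only reaches a threshold of order $\pi/\sqrt{m(m-1)}$, which is strictly worse than $\pi/m$; the ``warped-product comparison'' you gesture at is not carried out. In the known partial results the sharp constant is obtained by a different mechanism: one stays on the compact manifold, imposes the chirality-type boundary condition $s\nu\cdot\sigma u=u$ so that the boundary term $\int_{\partial M}\bigl(f+\tfrac m2 H\bigr)|u|^2\,{\rm vol}_{\partial M}$ is retained, and --- crucially --- replaces $|\nabla u|^2$ by the Penrose-operator term $\tfrac{m}{m-1}|\mathcal{P}u|^2$ in the Lichnerowicz formula; it is this Friedrich-type refinement, combined with a $\cot$-shaped potential blowing up at $\partial M$, that produces the factor $\tfrac{m}{m-1}$, the mean-curvature bound $-\tan(m\ell/2)$, and hence the threshold $\pi/m$. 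Two further points: the limiting argument $\ell\to\pi/m$ makes your worry about the borderline case moot in the even spin setting (no rigidity analysis of harmonic spinors is needed, since ${\rm dist}_{g_M}({\rm supp}({\rm d}\Phi),\partial M)\ge\pi/m$ strictly exceeds every admissible $\ell<\pi/m$); and what genuinely remains open --- the sharp odd-dimensional case (your suspension remark does not preserve the sharp constant) and the removal of the spin hypothesis --- is untouched by your sketch, so the conjecture as stated cannot be established by the argument you outline.
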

Cecchini firstly solved the case of this conjecture when $M$ is spin, $\Phi$ is strictly area non-increasing and the neck length inequality \eqref{eq:the-neck-length-inequality} is strict via Dirac operators with compatible potentials. 
\begin{theorem}\cite[Theorem~A, Long neck principle]{cecchini2020along}\label{thm:cecchini2020longneck}
Let $(M,g_M)$ be a compact $m$-dimensional Riemannian spin manifold with boundary. Let $\Phi: M\to S^m$ be a smooth strictly area non-increasing map. If $m$ is odd, we make the further assumption that $\Phi$ is constant in a neighborhood of $\partial M$. Assume that ${\rm scal}_M\geq \sigma>0$. Moreover, ${\rm scal}_M\geq m(m-1)$ on ${\rm supp}({\rm d}\Phi)$ and 
\begin{equation}\label{eq:cecchini2020necklength}
{\rm dist}_{g_M}({\rm supp}({\rm d}\Phi),\partial M)>\pi\sqrt{\frac{m-1}{m\sigma}}.
\end{equation}
Then ${\rm deg}(\Phi)=0$.
\end{theorem}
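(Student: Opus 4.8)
\medskip
\noindent\emph{Proof strategy.} The plan is to run Llarull's twisted‑spinor argument on $M$ and to handle the boundary by inserting a Callias‑type potential supported in the neck, the length hypothesis \eqref{eq:cecchini2020necklength} being precisely what makes such a potential available. First I would fix the geometric data: let $\mathcal{S}_{S^m}$ be the complex spinor bundle of $S^m$ with its Levi‑Civita connection and $\mathcal{S}_M$ the spinor bundle of $M$, and form the twisted Dirac bundle $E=\mathcal{S}_M\mathbin{\widehat{\otimes}}\Phi^{*}\mathcal{S}_{S^m}$ (graded when $m$ is even) with the tensor‑product connection and Clifford action; let $D_E$ be its Dirac operator. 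The Schrödinger--Lichnerowicz formula reads
\[
D_E^{2}=\nabla^{*}\nabla+\tfrac14\,{\rm scal}_M+\mathcal{R}^{E},
\]
where $\mathcal{R}^{E}$ is the twisting curvature term. Since $S^m$ has constant sectional curvature $1$ and $\Phi$ is area non‑increasing, Llarull's pointwise algebraic estimate gives $\mathcal{R}^{E}\ge-\tfrac14 m(m-1)$, with $\mathcal{R}^{E}=0$ on $M\setminus{\rm supp}({\rm d}\Phi)$; combined with ${\rm scal}_M\ge m(m-1)$ on ${\rm supp}({\rm d}\Phi)$, strict area non‑increasingness, and compactness, this upgrades to $\tfrac14\,{\rm scal}_M+\mathcal{R}^{E}\ge\varepsilon_0>0$ on ${\rm supp}({\rm d}\Phi)$ and $=\tfrac14\,{\rm scal}_M\ge\tfrac{\sigma}{4}$ elsewhere. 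Thus the zeroth‑order term of $D_E^{2}$ is everywhere positive --- the exact input that proves Llarull's theorem when $M$ is closed.

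Next I would make the degree visible to index theory. By \eqref{eq:cecchini2020necklength} the set ${\rm supp}({\rm d}\Phi)$ is disjoint from the collar $\{x:{\rm dist}_{g_M}(x,\partial M)<\pi\sqrt{(m-1)/(m\sigma)}\}$, so $\Phi$ is locally constant there, hence $\Phi^{*}\mathcal{S}_{S^m}$ is canonically trivialized near $\partial M$ and $E$ splits off a flat trivial summand. Comparing $\Phi$ with a constant map $\Phi_0$ agreeing with it near $\partial M$ produces a \emph{relative Gromov--Lawson pair}, and the relative index of the two Callias‑type operators attached to $\Phi$ and $\Phi_0$ equals $\deg(\Phi)\cdot\alpha_m$ with $\alpha_m=\pm\langle\widehat{A}(S^m)\,{\rm ch}(\mathcal{S}_{S^m}),[S^m]\rangle\neq 0$; when $m$ is odd one first crosses with $S^1$ (using constancy of $\Phi$ near $\partial M$) to reduce to an even‑dimensional computation for the suspended map. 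The operator attached to $\Phi_0$ has trivial kernel, since there $\mathcal{R}^{E}\equiv0$ and $\tfrac14{\rm scal}_M\ge\tfrac\sigma4$ leaves ample room to absorb any boundary contribution; hence, if $\deg(\Phi)\neq0$, the operator attached to $\Phi$ must have a nonzero kernel.

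The crux is to contradict this. On the trivialized region $M\setminus{\rm supp}({\rm d}\Phi)$ choose a parallel self‑adjoint involution $\iota$ of $E$ anticommuting with Clifford multiplication, pick $\psi$ supported there (so $\psi\equiv0$ near ${\rm supp}({\rm d}\Phi)$), and pass to the Callias operator $\mathcal{B}=D_E+\psi\iota$, for which $\mathcal{B}^{2}=D_E^{2}+\psi^{2}+c({\rm d}\psi)\iota$. For $\phi\in\ker\mathcal{B}$, integrating the Weitzenböck identity and using the refined Kato inequality $|\nabla\phi|^{2}\ge\tfrac{m}{m-1}\big|{\rm d}|\phi|\big|^{2}$ reduces $\phi\equiv0$ to the positivity of the scalar Schrödinger operator $-\tfrac{m}{m-1}\Delta+\big(\tfrac14{\rm scal}_M+\mathcal{R}^{E}+\psi^{2}-|{\rm d}\psi|\big)$, the boundary contribution being absorbed by the Callias framework (with no hypothesis on the second fundamental form of $\partial M$). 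On ${\rm supp}({\rm d}\Phi)$ the potential term is $\ge\varepsilon_0$; in the neck, where only ${\rm scal}_M\ge\sigma$ and $\mathcal{R}^{E}=0$ are available, one must interpolate $\psi$ from $0$ up to the value forced near $\partial M$ while keeping this operator nonnegative, which amounts to a one‑dimensional Riccati‑type inequality; comparison with the extremal rescaled $\tan$‑solution, whose pole is governed by $\sigma$ and the Kato constant $m/(m-1)$, shows that an admissible $\psi$ exists precisely when the neck length exceeds $\pi\sqrt{(m-1)/(m\sigma)}$. Under \eqref{eq:cecchini2020necklength} we are in that regime, so $\ker\mathcal{B}=0$, the relative index vanishes, and therefore $\deg(\Phi)=0$.

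I expect the main obstacle to be exactly this last step: obtaining the \emph{sharp} constant $\pi\sqrt{(m-1)/(m\sigma)}$ --- rather than the cruder $\pi/\sqrt\sigma$ that a naive pointwise estimate yields --- and pushing the Callias/potential estimate through without any control on the geometry of $\partial M$, which is where the potential method outperforms plain APS boundary conditions. Secondary, more bookkeeping‑type difficulties are the functional‑analytic setup of the relative Callias index so that it provably equals $\deg(\Phi)\cdot\alpha_m$, and the odd‑dimensional reduction via crossing with $S^1$.
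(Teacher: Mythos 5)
Your overall route---twisting by $\Phi^*\mathcal{S}_{S^m}$, comparing $\Phi$ with a constant map to build a relative Gromov--Lawson pair, identifying the index of a Callias operator $\mathcal{B}=\mathcal{D}+\psi\sigma$ with $\deg(\Phi)$ times a nonzero characteristic number, and excluding a kernel by a $\tan/\cot$-type potential in the neck---is exactly the mechanism of Cecchini's cited proof and of the Callias machinery this paper itself runs for Lemma \ref{lem:relative-GL-pair}, Theorem \ref{thm:main} and Corollary \ref{cor:spectral-long-neck-for-sphere} (the paper only quotes Theorem \ref{thm:cecchini2020longneck}, it does not reprove it). However, the step you yourself single out as the main obstacle is genuinely missing, and in the form you wrote it down it would fail.

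The refined Kato inequality $|\nabla\phi|^2\geq\frac{m}{m-1}\big|{\rm d}|\phi|\big|^2$ is an inequality for \emph{Dirac-harmonic} spinors; an element $\phi\in\ker\mathcal{B}$ satisfies $\mathcal{D}\phi=-\psi\sigma\phi\neq0$ precisely in the neck, so your reduction to positivity of $-\frac{m}{m-1}\Delta+\big(\frac14{\rm scal}_M+\mathcal{R}^E+\psi^2-|{\rm d}\psi|\big)$ is not available there. Moreover, even granting that operator, the Riccati comparison you invoke only sees the zeroth-order terms: in the neck it reads $|\psi'|\leq\psi^2+\frac{\sigma}{4}$, whose $0$-to-$\infty$ passage length is $\int_0^\infty\big(\psi^2+\frac{\sigma}{4}\big)^{-1}{\rm d}\psi=\pi/\sqrt{\sigma}$, which is \emph{larger} than the assumed neck length $\pi\sqrt{(m-1)/(m\sigma)}$; the Kato constant sitting on the Laplacian does not move the pole of that ODE, so no admissible $\psi$ exists and the argument stalls exactly at the sharp constant. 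The correct bookkeeping puts the factor $\frac{m}{m-1}$ on the curvature terms, not on the Laplacian: one uses the Penrose identity $|\nabla u|^2=|\mathcal{P}u|^2+\frac1m|\mathcal{D}u|^2$ through $\mathcal{D}u=-f\sigma u$, so that the term $|\mathcal{D}u|^2=f^2|u|^2$ is what absorbs the $\frac1m$, and one arrives at the identity quoted at the start of the proof of Theorem \ref{thm:main} (Cecchini--Zeidler, Eq.~4.4),
\begin{equation*}
0=\int_{M}\Big[\tfrac{m}{m-1}\Big(|\mathcal{P}u|^2+\big\langle u,\tfrac{{\rm scal}_M}{4}u+\mathscr{R}^{\mathcal{E}\oplus\mathcal{F}}u\big\rangle\Big)+\big\langle u,f^2u+\nabla f\cdot\sigma u\big\rangle\Big]\,{\rm vol}_{M}+\int_{\partial M}\Big(f+\tfrac m2H\Big)|u|^2\,{\rm vol}_{\partial M},
\end{equation*}
in which $f^2-|\nabla f|$ enters with coefficient $1$. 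The neck condition is then $f^2-|\nabla f|+\frac{m\sigma}{4(m-1)}\geq0$, whose passage length $\int_0^\infty\big(f^2+\frac{m\sigma}{4(m-1)}\big)^{-1}{\rm d}f=\pi\sqrt{\frac{m-1}{m\sigma}}$ matches \eqref{eq:cecchini2020necklength} exactly, and the blow-up of $f$ at $\partial M$ is what absorbs the boundary term without any hypothesis on the second fundamental form. A secondary slip: your index constant $\langle\widehat{A}(S^m)\,{\rm ch}(\mathcal{S}_{S^m}),[S^m]\rangle$ vanishes if ${\rm ch}$ is taken of the ungraded spinor bundle (already for $S^2$); the nonzero constant is the Euler-characteristic pairing coming from the graded difference ${\rm ch}(\mathcal{S}^+)-{\rm ch}(\mathcal{S}^-)$, i.e.\ ${\rm indrel}=\deg(\Phi)\cdot\chi(S^m)$, as in \eqref{eq:GS-index-vanishing}.
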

Subsequently, Cecchini and Zeidler made further progress on this problem using Dirac operators with admissible potentials and the mean curvature of the boundary. 
\begin{theorem}[{\cite[Theorem~1.4]{cecchini2022scalar}}]\label{thm:CZ-result}
Let $(M,g_M)$ be a compact $m$-dimensional Riemannian spin manifold with non-empty boundary, $m\geq 2$ even, and let $\Phi:M\to S^m$ be a smooth area non-increasing map. Assume that ${\rm scal}_{M}\geq m(m-1)$. Moreover, suppose there exists $\ell\in (0,\pi/m)$ such that the mean curvature $H \geq -\tan(m\ell/2)$ and ${\rm dist}_{g_M}({\rm supp}({\rm d}\Phi),\partial M)\geq \ell$. Then 
\[
  {\rm deg}(\Phi)=0.
\]
\end{theorem}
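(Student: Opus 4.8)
The plan is to argue by contradiction, running a spinorial argument on $(M,g_M)$ itself in the spirit of Llarull's theorem and of Cecchini's long neck principle: a twisted Dirac operator carrying a Callias-type modification on the collar, together with a chiral boundary condition whose Weitzenb\"ock boundary term is controlled by the mean curvature $H$. So suppose $\deg(\Phi)\ne0$; post-composing $\Phi$ with an orientation-reversing isometry of $S^m$ if necessary (which preserves every hypothesis), we may assume $\deg(\Phi)>0$. Since $m$ is even, $S_M\to M$ is $\mathbb Z/2$-graded, as is $S_{S^m}\to S^m$; I would set $E=\Phi^*S_{S^m}=E^+\oplus E^-$ with the pulled-back spin connection and form $\mathcal S=S_M\,\widehat{\otimes}\,E$, with Dirac operator $D$ and grading $\varepsilon$. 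Writing $t=\mathrm{dist}_{g_M}(\,\cdot\,,\partial M)$ and $U_\ell=\{t<\ell\}$, the hypothesis $\mathrm{dist}_{g_M}(\mathrm{supp}(\mathrm d\Phi),\partial M)\ge\ell$ forces $\Phi$ to be locally constant on $U_\ell$, so $E$ is canonically flat there and $\mathrm{supp}(\mathrm d\Phi)\subset\{t\ge\ell\}$. On $\partial M$ I would impose the local chiral (MIT-bag type) boundary condition $B$ given by the involution $\varepsilon\,c(\nu)$, $\nu$ the inner unit normal.

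By the Lichnerowicz formula, $D^2=\nabla^*\nabla+\tfrac14\mathrm{scal}_M+\mathcal R^E$, with $\mathcal R^E$ the curvature term of the twist. As in Llarull's theorem, area non-increasing of $\Phi$ gives $\mathcal R^E\ge-\tfrac14 m(m-1)$ pointwise, with $\mathcal R^E=0$ wherever $\mathrm d\Phi=0$ (so on all of $U_\ell$); combined with $\mathrm{scal}_M\ge m(m-1)$ this yields $\tfrac14\mathrm{scal}_M+\mathcal R^E\ge0$ on $M$ and $\tfrac14\mathrm{scal}_M+\mathcal R^E=\tfrac14\mathrm{scal}_M>0$ on $U_\ell$. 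For $u$ in the domain of $(D,B)$, integration by parts together with the chiral boundary term gives
\[
\|Du\|_{L^2(M)}^2=\|\nabla u\|_{L^2(M)}^2+\int_M\Big(\tfrac14\mathrm{scal}_M+\mathcal R^E\Big)|u|^2+\tfrac12\int_{\partial M}H\,|u|^2 ,
\]
which already forces $u\equiv0$ when $H\ge0$. To absorb a negative $H$, I would modify $\nabla$ on the collar by $\widetilde\nabla_X=\nabla_X+\beta(t)\,c(X)$ (equivalently, twist $D$ by a Callias potential of the same profile), with $\beta$ a Lipschitz function of $t$ supported in $[0,\ell]$ and $\beta\equiv0$ for $t\ge\ell$; completing the square and using the refined (twistor) inequality $|\nabla u|^2\ge\tfrac1m|Du|^2$ to extract the factor $m$ from $\sum_i c(e_i)^2=-m$, one obtains, for $u\in\ker(D^+,B)$,
\[
0=\|Du\|_{L^2(M)}^2\ \ge\ \|\widetilde\nabla u\|_{L^2(M)}^2+\int_M\big(\tfrac14\mathrm{scal}_M+\mathcal R^E-V_\beta\big)|u|^2+\tfrac12\int_{\partial M}\big(H+2\beta(0)\big)|u|^2 ,
\]
where $V_\beta$, supported in $U_\ell$, is a Riccati-type expression in $\beta'$ and $m\beta^2$.

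The key step is then to choose $\beta$ on $[0,\ell]$ so that $\tfrac14\mathrm{scal}_M-V_\beta\ge0$ on $U_\ell$ while $2\beta(0)$ is as large as possible. Through $\mathrm{scal}_M\ge m(m-1)$ this reduces to solving the model Riccati inequality of the round sphere of scalar curvature $m(m-1)$ on the interval $[0,\ell]$: an admissible $\beta$ should exist precisely when $\ell<\pi/m$, with the supremum of $2\beta(0)$ over admissible profiles equal to $\tan(m\ell/2)$, attained by $\beta\propto\tan\!\big(\tfrac m2(\ell-t)\big)$. Then, if $H\ge-\tan(m\ell/2)$ one has $H+2\beta(0)\ge0$, so every term on the right of the last display vanishes: $\widetilde\nabla u=0$ on $M$ (hence $|u|$ is constant off $U_\ell$) and $\big(\tfrac14\mathrm{scal}_M+\mathcal R^E\big)|u|^2\equiv0$ off $U_\ell$. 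If $\tfrac14\mathrm{scal}_M+\mathcal R^E>0$ somewhere off $U_\ell$ then $u$ vanishes there, hence everywhere by the constancy of $|u|$ and unique continuation for $Du=0$; in the remaining case ($\mathrm{scal}_M\equiv m(m-1)$ and $\mathcal R^E\equiv-\tfrac14 m(m-1)$ off $U_\ell$) a nonzero $u$ would force, via Llarull's rigidity analysis, $\Phi$ to be a local isometry off $U_\ell$, incompatible with $\Phi$ being constant on $U_\ell$. So $\ker(D^+,B)=0$.

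The last step would be the index computation. Since $E$ -- hence $E^+-E^-$ and all its Chern--Weil forms -- is flat near $\partial M$ and $\mathrm{rk}\,E^+=\mathrm{rk}\,E^-$, the chiral-bag index has no boundary contribution, so the relative index theorem for the Gromov--Lawson pair given by $(M,\mathcal S)$ relative to the round background $(S^m,\,S_{S^m}\,\widehat{\otimes}\,S_{S^m})$ via the comparison map $\Phi$ gives $\mathrm{ind}(D^+,B)=\langle\widehat A(M)\,\mathrm{ch}(E^+-E^-),[M,\partial M]\rangle=2\deg(\Phi)>0$, using $\widehat A(S^m)=1$ and $\mathrm{ch}(S^+_{S^m}-S^-_{S^m})=e(TS^m)$. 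A positive index forces $\ker(D^+,B)\ne0$, contradicting the previous step, so $\deg(\Phi)=0$. I expect the main obstacle to be the sharp collar estimate: identifying the extremal Riccati profile and verifying that it produces exactly the boundary coefficient $H+\tan(m\ell/2)$, so that both the threshold $\ell<\pi/m$ and the precise constant appear -- a Riccati comparison with the round sphere of scalar curvature $m(m-1)$ that leans crucially on the refined inequality $|\nabla u|^2\ge\tfrac1m|Du|^2$ to obtain the factor $m$ inside the tangent. The remaining points -- Fredholmness of the Callias boundary value problem with the chiral condition and the topological index identification -- should be routine once the flatness of $E$ near $\partial M$ is used.
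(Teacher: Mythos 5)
Your overall strategy (contradiction via a twisted Dirac operator with a Callias/collar potential, a Lichnerowicz--Penrose estimate with the twistor refinement $|\nabla u|^2\ge\frac1m|Du|^2$, the mean-curvature boundary term, and the obstruction $\deg(\Phi)\cdot\chi(S^m)=2\deg(\Phi)$) is the same mechanism as in Cecchini--Zeidler, which this paper quotes for Theorem~\ref{thm:CZ-result} and imitates in its own Theorem~\ref{thm:generalized-pointwise-long-neck-principle} via Lemma~\ref{lem:relative-GL-pair}. But there is a genuine gap at the index-theoretic step. You work with the single twisted bundle $\mathcal S_M\widehat\otimes\Phi^*\mathcal S_{S^m}$, impose the chiral bag condition $\varepsilon\,c(\nu)u=u$, and assert that because $\Phi^*\mathcal S_{S^m}$ is flat near $\partial M$ the index ``has no boundary contribution'' and equals $\langle\widehat A(M)\,\mathrm{ch}(E^+-E^-),[M,\partial M]\rangle=2\deg(\Phi)$. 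That identification is not routine and is exactly the point the Gromov--Lawson pair machinery is designed to handle: local bag-type boundary conditions carry boundary correction terms in general, and flatness of the coefficient bundle near $\partial M$ alone does not make them vanish, nor does it by itself define the relative Chern class you are pairing with (one needs a chosen isomorphism $E^+\cong E^-$ near $\partial M$ entering the boundary condition). In the paper's (and Cecchini--Zeidler's) setup one introduces a second bundle $\mathcal F=\Psi^*\mathcal S_{S^m}$, with $\Psi$ a locally constant map homotopic to $\Phi$ built from geodesics to a basepoint, a parallel unitary isomorphism $\daleth$ on $M\setminus K$, the involution $\sigma$ on $\mathcal S_M\widehat\otimes(\mathcal E\oplus\mathcal F^{\rm op})$, and the boundary condition $s\nu\cdot\sigma u=u$; then Lemma~\ref{lem:index-coincide-relative-index} identifies $\mathrm{ind}(\mathcal B_{f,-1})$ with the index of the Dirac operator on the closed double twisted by $V(\mathcal E,\mathcal F)=\Theta^*\mathcal S_{S^m}$, which Goette--Semmelmann/Atiyah--Singer computes as $\deg(\Phi)\chi(S^m)$. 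Your sketch invokes ``the relative index theorem for the Gromov--Lawson pair'' but never constructs the second bundle or the involution that makes that theorem applicable to your boundary value problem, so as written the crucial nonvanishing $\mathrm{ind}\ne0$ is unsubstantiated.

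Two smaller points. First, your kernel-vanishing step can be simplified and should not need Llarull-type rigidity: on the collar $U_\ell$ one has ${\rm d}\Phi=0$, hence the zero-order term is $\ge\frac14 m(m-1)>0$ there, so any section with $\widetilde\nabla u=0$ (which makes $|u|$ constant, since $c(X)$ is skew) and vanishing energy identity must vanish on $U_\ell$ and therefore everywhere; invoking rigidity of $\Phi$ off $U_\ell$ is both unnecessary and delicate under the non-strict hypotheses. Second, the calibration of the collar term is where the sharp constants live: in the Cecchini--Zeidler bookkeeping the boundary coefficient is $f+\frac m2 H$ (with $H=\frac1{m-1}\mathrm{tr}\,{\rm II}$) after the Penrose factor $\frac m{m-1}$ is inserted, and the potential enters as $f\sigma$ rather than as a connection deformation $\beta(t)c(X)$ (which amounts to a scalar potential $-m\beta$ added to $D$ and does not anticommute with $D$); your announced coefficients reproduce the right threshold $\tan(m\ell/2)$ with $\ell<\pi/m$, but the completing-the-square computation and the boundary normalization would need to be carried out explicitly before one can accept them.
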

The neck length inequality in Theorem \ref{thm:CZ-result} is sharp, cf. \cite[Proposition~5.2]{cecchini2022scalar}. For $m\geq 2$ even in the spin setting, Conjecture \ref{conj:long-neck-problem} holds by letting $\ell\to \pi/m$.

Following the ideas in Theorem \ref{thm:CZ-result}, and motivated by the scalar curvature estimate in \cite{goette2002scalar}, we obtain our first result on the long neck problem.
\begin{theorem}\label{thm:generalized-pointwise-long-neck-principle}
    Let $(N,g_N)$ be a compact $n$-dimensional Riemannian manifold with non-negative curvature operator $\mathcal{R}_N\geq 0$ on $\Lambda^2TN$, where $n\geq 2$ even. Let $(M,g_M)$ be a compact $m$-dimensional Riemannian spin manifold with non-empty boundary, where $m=n+4k$ for $k\in\mathbb{Z}_{\geq 0}$. Let $\Phi: M\to N$ be a smooth, area non-increasing, spin map that is locally constant near the boundary $\partial M$. Assume that
\begin{itemize}
    \item[\rm (i)] the Euler characteristic $\chi(N)$ of $N$ is non-vanishing;
    \item[\rm (ii)] the scalar curvature 
    \begin{equation}
     {\rm scal}_{M}\geq {\rm scal}_N\circ \Phi
    \end{equation}
    on $M$ and there exists a point $p_0\in U_{\frac{1}{2}}=\{p\in {\rm supp}({\rm d}\Phi)|\, \|\Lambda^2{\rm d}\Phi\|(p)<1/2\}$ such that $({\rm scal}_N \circ \Phi)(p_0)>0$;
    \item[\rm (iii)]  there exists $\ell\in \left(0,\pi\sqrt{\frac{m-1}{m\Xi}}\right)$ such that the mean curvature 
    \begin{equation}
    H \geq -\tan\left(\frac{\ell}{2}\sqrt{\frac{m\Xi}{m-1}}\right),\quad \text{where}\  \Xi=\sup\limits_{M} \{ {\rm scal}_N\circ \Phi  \}, 
     \end{equation}
    and the length of the neck ${\rm dist}_{g_M}({\rm supp}({\rm d}\Phi),\partial M)\geq \ell $.
\end{itemize}
Then ${\rm deg}_{\widehat{A}}(\Phi)=0$. 
\end{theorem}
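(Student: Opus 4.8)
The plan is to run the spinorial Callias operator argument behind Theorems~\ref{thm:cecchini2020longneck} and~\ref{thm:CZ-result}, but with the Dirac operator on $M$ twisted by the spinor data pulled back from the target $N$, so that the nonvanishing of $\chi(N)$ forces a nonzero index. Because $\Phi$ is a spin map, the virtual bundle $TM-\Phi^{*}TN$ is spin, so although neither $M$ nor $\Phi^{*}TN$ need be spin, the $\mathbb Z/2$-graded twisted bundle $\mathcal E:=S_{M}\otimes\Phi^{*}S_{N}$ (the $\mathbb Z/2$-graded tensor product of the local spinor bundles, whose $\pm1$ ambiguities cancel) is a genuine Dirac bundle over $M$; since $n$ is even it inherits the chirality grading of $\Phi^{*}S_{N}=\Phi^{*}S_{N}^{+}\oplus\Phi^{*}S_{N}^{-}$, and since $m=n+4k$ the relative index of the associated twisted Dirac operator $D_{\mathcal E}$ is a top-degree characteristic number. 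As $\Phi$ is locally constant near $\partial M$, the bundle $\Phi^{*}S_{N}$ is canonically trivialized there, so $\mathcal E$ agrees with a flat bundle outside $\mathrm{supp}(\mathrm d\Phi)$; this produces the relative Gromov--Lawson pair on $(M,\partial M)$ relative to the constant value of $\Phi$ near the boundary, for which a relative (Callias-type) index theorem applies. Combining it with the identity $\mathrm{ch}(S_{N}^{+})-\mathrm{ch}(S_{N}^{-})=e(TN)\,\widehat A(TN)^{-1}$ and hypothesis (i), which makes $e(TN)\neq0$ and the resulting pairing detect the degree, the relative index of this pair equals, up to a fixed nonzero normalization, $\mathrm{deg}_{\widehat A}(\Phi)$.

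Assume for contradiction that $\mathrm{deg}_{\widehat A}(\Phi)\neq0$. I would then introduce a Callias potential $\psi\in C^{\infty}(M)$ that is a suitable constant on a neighborhood of $\mathrm{supp}(\mathrm d\Phi)$ and, on the collar $\{0\le t\le\ell\}$ between $\mathrm{supp}(\mathrm d\Phi)$ and $\partial M$ (with $t=\mathrm{dist}_{g_{M}}(\cdot,\mathrm{supp}(\mathrm d\Phi))$), is built from the solution of the Riccati equation $f'=\tfrac{m\Xi}{4(m-1)}+f^{2}$, namely
\[
 \psi=f(t)=\tfrac12\sqrt{\tfrac{m\Xi}{m-1}}\,\tan\!\Bigl(\tfrac{t}{2}\sqrt{\tfrac{m\Xi}{m-1}}+b\Bigr),
\]
with phase $b$ chosen so that $\psi\ge-H/2$ on $\partial M$; the hypotheses $\ell<\pi\sqrt{(m-1)/(m\Xi)}$ and $H\ge-\tan\!\bigl(\tfrac{\ell}{2}\sqrt{m\Xi/(m-1)}\bigr)$ are exactly what make this $f$ finite on $[0,\ell]$ and able to reach the required boundary value (when $\mathrm{scal}_{N}\equiv0$ one uses $f'=f^{2}$ instead). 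Let $\mathscr D=D_{\mathcal E}+\psi\,\mathcal Z$ be the resulting Callias operator, with $\mathcal Z$ the auxiliary grading anticommuting with Clifford multiplication, equipped with the chirality local boundary condition as in Theorem~\ref{thm:CZ-result}. The relative index theorem for the relative Gromov--Lawson pair then computes the index of $\mathscr D$ with this boundary condition to be $\mathrm{deg}_{\widehat A}(\Phi)\neq0$, so $\mathscr D$ has a nonzero kernel element $u$ satisfying the boundary condition.

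Integrating the Schr\"odinger--Lichnerowicz--Weitzenb\"ock formula with potential over $M$ then gives
\[
 0=\|\mathscr D u\|^{2}=\|\nabla u\|^{2}+\int_{M}\Bigl(\tfrac{\mathrm{scal}_{M}}{4}+\mathcal R^{\Phi^{*}S_{N}}+\psi^{2}-|\mathrm d\psi|\Bigr)|u|^{2}+\int_{\partial M}\Bigl(\tfrac{H}{2}+\psi\Bigr)|u|^{2}.
\]
A Goette--Semmelmann-type pointwise estimate of the twisting curvature, using $\mathcal R_{N}\ge0$ on $\Lambda^{2}TN$ and that $\Phi$ is area non-increasing, gives $\mathcal R^{\Phi^{*}S_{N}}\ge-\tfrac14\|\Lambda^{2}\mathrm d\Phi\|\,(\mathrm{scal}_{N}\circ\Phi)$ on $\mathrm{supp}(\mathrm d\Phi)$, with the sharper bound $-\tfrac18(\mathrm{scal}_{N}\circ\Phi)$ on $U_{1/2}$, and $\mathcal R^{\Phi^{*}S_{N}}=0$ off $\mathrm{supp}(\mathrm d\Phi)$; since $\mathcal R_{N}\ge0$ forces $\mathrm{scal}_{N}\ge0$ and (ii) gives $\mathrm{scal}_{M}\ge\mathrm{scal}_{N}\circ\Phi$ on $\mathrm{supp}(\mathrm d\Phi)$, the bulk integrand is $\ge0$ there and, at the point $p\in U_{1/2}$ with $\mathrm{scal}_{M}(p)>0$, strictly positive in a neighborhood of $p$; on the collar the Riccati choice makes $\tfrac{\mathrm{scal}_{M}}{4}+\psi^{2}-|\mathrm d\psi|\ge0$, exactly as in Theorems~\ref{thm:cecchini2020longneck} and~\ref{thm:CZ-result}; and the boundary integrand is $\ge0$ by the choice of $\psi|_{\partial M}$. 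Hence every term on the right is $\ge0$, forcing $\nabla u=0$ (so $|u|$ is constant) and the bulk integrand times $|u|^{2}$ to vanish identically; but that integrand is strictly positive near $p$, so $u$ vanishes near $p$, whence $u\equiv0$---a contradiction. Therefore $\mathrm{deg}_{\widehat A}(\Phi)=0$.

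The step I expect to be the main obstacle is the index-theoretic one: giving a clean definition of $\mathrm{deg}_{\widehat A}(\Phi)$ for a spin map into a possibly non-spin $N$ over a compact $M$ with boundary, and proving the relative Callias index theorem that identifies it with the analytic index of $\mathscr D$---in particular making the relative Gromov--Lawson pair absorb the boundary correctly and verifying that the chirality local boundary condition is self-adjoint and regular for $\mathscr D$. The Weitzenb\"ock inequality, the curvature estimate, and the Riccati construction of the potential are by comparison adaptations of the arguments behind Theorems~\ref{thm:cecchini2020longneck} and~\ref{thm:CZ-result} and of the Goette--Semmelmann scalar curvature estimates.
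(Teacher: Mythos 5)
Your outline coincides with the paper's strategy (twist by $\Phi^*\mathcal{S}_N$, pair it with a flat companion bundle to get a relative Gromov--Lawson pair, identify the relative index with $\deg_{\widehat A}(\Phi)\cdot\chi(N)$ via Goette--Semmelmann, then run a Callias/Weitzenb\"ock argument with a tangent-type potential), but the analytic core as you wrote it does not close. You integrate the unrefined Lichnerowicz identity, with boundary coefficient $\tfrac{H}{2}+\psi$ and no $\tfrac{m}{m-1}$ factor on the curvature terms. The paper's proof runs the Cecchini--Zeidler argument for Theorem~\ref{thm:CZ-result}, whose starting point is the Penrose-operator (Friedrich) refinement
\[
0=\int_M\Big[\tfrac{m}{m-1}\big(|\mathcal{P}u|^2+\langle u,\tfrac{{\rm scal}_M}{4}u+\mathscr{R}u\rangle\big)+\langle u,f^2u+\nabla f\cdot\sigma u\rangle\Big]{\rm vol}_M+\int_{\partial M}\big(f+\tfrac{m}{2}H\big)|u|^2\,{\rm vol}_{\partial M},
\]
and the constants in hypotheses~(iii) are calibrated to exactly this $\tfrac{m}{m-1}$ and $\tfrac{m}{2}H$. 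With your version, the Riccati choice $\psi'=\tfrac{m\Xi}{4(m-1)}+\psi^2$ gives $\psi^2-|\mathrm d\psi|=-\tfrac{m\Xi}{4(m-1)}$ on the collar, so nonnegativity of your bulk integrand would require ${\rm scal}_M\ge\tfrac{m}{m-1}\Xi$ there, which is not among the hypotheses; and your boundary requirement $\psi\ge -H/2$ with $\psi|_{\partial M}=\tfrac12\sqrt{\tfrac{m\Xi}{m-1}}\tan(\cdot)$ does not follow from $H\ge-\tan\big(\tfrac{\ell}{2}\sqrt{\tfrac{m\Xi}{m-1}}\big)$ when $\Xi<\tfrac{m-1}{m}$, and there is no room to shift the phase $b$ since $\ell$ may be arbitrarily close to $\pi\sqrt{(m-1)/(m\Xi)}$. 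Note also that your assertion that the collar inequality holds ``exactly as in'' Theorems~\ref{thm:cecchini2020longneck} and~\ref{thm:CZ-result} silently invokes a scalar-curvature lower bound off ${\rm supp}({\rm d}\Phi)$ (in Cecchini--Zeidler this is ${\rm scal}_M\ge m(m-1)$ on all of $M$); you must state and use whatever bound you rely on there, and redo the potential/boundary calibration with the refined identity rather than borrow the constants.

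The index step also has a gap. Saying that $\Phi^*\mathcal{S}_N$ ``agrees with a flat bundle outside ${\rm supp}({\rm d}\Phi)$'' is not yet a relative Gromov--Lawson pair: you need a globally defined companion bundle $\mathcal{F}$ on $M$ with a parallel unitary isomorphism to $\Phi^*\mathcal{S}_N$ on a neighborhood of $\overline{M\setminus K}$, and you then need to compute the relative index of the doubled bundle, checking that the chosen extension does not alter it. The paper's Lemma~\ref{lem:relative-GL-pair} does precisely this: it builds an auxiliary spin map $\Psi$ sending each boundary value $q_i$ along a geodesic to a common basepoint, sets $\mathcal{F}=\Psi^*\mathcal{S}_N$ (flat because $\Lambda^2{\rm d}\Psi=0$), obtains the parallel isomorphism from a trivialization of $\mathcal{S}_N$ over disjoint balls around the $q_i$, and glues $\Phi$ and $\Psi$ to a map $\Theta$ on ${\rm d}M$ with ${\rm deg}_{\widehat A}(\Theta)={\rm deg}_{\widehat A}(\Phi)$, so that the Goette--Semmelmann index formula gives ${\rm indrel}(M;\mathcal{E},\mathcal{F})={\rm deg}_{\widehat A}(\Phi)\chi(N)\neq0$; the transition from this relative index to the index of the Callias boundary value problem is Lemma~\ref{lem:index-coincide-relative-index}. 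Your sketch omits this construction, and in addition the potential must vanish on the support $K$ of the pair (your $\mathcal{Z}$ exists only where the parallel isomorphism does), so ``a suitable constant on a neighborhood of ${\rm supp}({\rm d}\Phi)$'' has to be zero there.
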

For the definitions of curvature operators, area non-increasing spin maps and $\widehat{A}$-degree of a map see \cite{goette2002scalar}. Our theorem uses ${\rm deg}_{\widehat{A}}(\Phi)$ instead of ${\rm deg}(\Phi)$ when $\dim M=\dim N+4k$ for $k\in\mathbb{Z}_{\geq 0}$. If $\dim M=\dim N$ and $N$ is a unit sphere, then Theorem \ref{thm:generalized-pointwise-long-neck-principle} reduces to Theorem \ref{thm:CZ-result}.

It is well-known that a symmetric space $G/H$ of compact type has non-negative curvature operator. Moreover, ${\rm rank}\ G={\rm rank}\ H$ is equivalent to $\chi(G/H)\neq 0$ (cf.\cite{hopf1940satz}). In this case, Theorem \ref{thm:generalized-pointwise-long-neck-principle} reduces to an immediate corollary by \cite[Theorem~0.1]{goette2002scalar}.
\begin{corollary}
    Let $(N,g_N)$ be a compact $n$-dimensional Riemannian manifold with non-negative curvature operator $\mathcal{R}_N\geq 0$ on $\Lambda^2TN$, where $n\geq 2$ even, such that the universal covering of $N$ is homeomorphic to a symmetric space $G/H$ of compact type with ${\rm rank}\, G = {\rm rank}\, H$. Let $(M,g_M)$ be a compact $m$-dimensional Riemannian spin manifold with non-empty boundary, where $m=n+4k$ for $k\in\mathbb{Z}_{\geq 0}$. Let $\Phi: M\to N$ be a smooth, area non-increasing, spin map which is locally constant near the boundary $\partial M$. Assume that the scalar curvature ${\rm scal}_M\geq {\rm scal}_N\circ \Phi$ on $M$ and there exists a point $p_0 \in U_{\frac{1}{2}}=\{p\in {\rm supp}({\rm d}\Phi)|\, \|\Lambda^2{\rm d}\Phi\|(p)<1/2\}$ such that $({\rm scal}_N \circ \Phi)(p_0)>0$. Moreover, suppose that there exists $\ell\in \left(0,\pi\sqrt{\frac{m-1}{m\Xi}}\right)$ such that the mean curvature 
    \begin{equation}
     H \geq -\tan\left(\frac{\ell}{2}\sqrt{\frac{m\Xi}{m-1}}\right),\quad \text{where}\  \Xi=\sup\limits_{M} \{ {\rm scal}_N\circ \Phi \}, 
     \end{equation}
    and the length of the neck ${\rm dist}_{g_M}({\rm supp}({\rm d}\Phi),\partial M)\geq \ell$. Then ${\rm deg}_{\widehat{A}}(\Phi)=0$.
\end{corollary}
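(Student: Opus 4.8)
The plan is to obtain this corollary as a direct specialization of Theorem~\ref{thm:generalized-pointwise-long-neck-principle}. Its hypotheses coincide word for word with those of that theorem, except that condition~(i) — the non-vanishing of $\chi(N)$ — has been replaced by the purely topological requirement that the universal cover $\widetilde N$ be homeomorphic to an equal-rank compact-type symmetric space $G/H$. So the entire task is to deduce $\chi(N)\neq 0$ from that requirement; once this is done, Theorem~\ref{thm:generalized-pointwise-long-neck-principle} applies verbatim and yields ${\rm deg}_{\widehat A}(\Phi)=0$.

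To verify $\chi(N)\neq 0$, recall the Hopf--Samelson theorem (cf.~\cite{hopf1940satz}): for a compact-type symmetric space $G/H$ one has ${\rm rank}\,G={\rm rank}\,H$ if and only if $\chi(G/H)\neq 0$, and in that case $\chi(G/H)>0$. Since the Euler characteristic is a homotopy (hence homeomorphism) invariant and $\widetilde N$ is homeomorphic to $G/H$, this gives $\chi(\widetilde N)=\chi(G/H)>0$. Because $G/H$ is compact, $\widetilde N$ is compact; therefore $\pi_1(N)$ is finite and the covering $\widetilde N\to N$ is finite-sheeted, so multiplicativity of the Euler characteristic under finite coverings yields $\chi(N)=\chi(\widetilde N)/|\pi_1(N)|>0$. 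In particular condition~(i) of Theorem~\ref{thm:generalized-pointwise-long-neck-principle} is satisfied.

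It remains only to note that the other hypotheses transfer without change: the assumption $\mathcal{R}_N\geq 0$ is retained as a hypothesis of the corollary — and cannot be omitted, since ``$\widetilde N$ homeomorphic to $G/H$'' says nothing about the metric $g_N$ — while the scalar-curvature comparison ${\rm scal}_M\geq {\rm scal}_N\circ\Phi$ on ${\rm supp}({\rm d}\Phi)$, the point of positive scalar curvature in $U_{1/2}$, the mean-curvature bound, and the neck-length bound are all literally the same. Invoking Theorem~\ref{thm:generalized-pointwise-long-neck-principle} then finishes the proof. There is no genuine obstacle here: the only points deserving a word of care are the finiteness of $\pi_1(N)$ (which is precisely what legitimizes the Euler-characteristic computation) and the remark that equal-rank compact-type symmetric spaces are exactly the class for which the sharp twisting-curvature estimate of \cite[Theorem~0.1]{goette2002scalar} — the estimate underpinning Theorem~\ref{thm:generalized-pointwise-long-neck-principle} — is available, so that nothing beyond the parent theorem is required.
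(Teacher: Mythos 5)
Your proposal is correct and follows essentially the same route as the paper, which treats the corollary as an immediate specialization of Theorem~\ref{thm:generalized-pointwise-long-neck-principle} once the equal-rank hypothesis is converted, via Hopf--Samelson \cite{hopf1940satz}, into the non-vanishing of the Euler characteristic. Your explicit passage from $\chi(G/H)>0$ to $\chi(N)\neq 0$ (compactness of $\widetilde N$, finiteness of $\pi_1(N)$, multiplicativity of $\chi$ under finite coverings) merely spells out what the paper leaves implicit, so nothing further is needed.
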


Now, we introduce another approach to the long neck problem. When $M$ is closed, the \textit{$c$-spectral constant} (cf. \cite{hirsch2023spectral}) is defined by
\begin{equation}\label{eq:c-spectral-constant}
\Lambda_c=\inf\left\{\int_{M}(|\nabla u|^2+c\ {\rm scal}_M u^2) {\rm vol}_M \Big|\, u\in H_0^1(M),\int_{M}u^2 {\rm vol}_{M}=1\right\}, c\in \mathbb{R}.
\end{equation}
Here $H_0^1(M)$ is the completion of $C_0^\infty(M)$ in the Sobolev $H^1$-norm. When $M$ is a compact manifold with boundary, $\Lambda_c$ is defined as the $c$-spectral constant of the interior $M^{\circ}$ of $M$ which coincides with the principal Dirichlet eigenvalue of the Schr\"{o}dinger operator $-\Delta+c\ {\rm scal}_M$, and the condition $\Lambda_c>0$ may be interpreted as a weak notion of positive scalar curvature if $c>0$. Note that if ${\rm scal}_M\geq \sigma>0$, then $\Lambda_c\geq c\sigma$. Different values of $c$ are used in variant geometric contexts: the search for black holes, the Yamabe problem, minimal surfaces, and Ricci flow with surgery, etc. See Li-Mantoulidis \cite{li2023metrics} for an extended discussion.

In the recent work \cite[Theorem~1.2]{hirsch2023spectral}, the authors estimated the widths of $\widehat{A}$-overtorical bands with respect to $c$-spectral $\Lambda_c$ under the spin assumption by modifying the arguments of spinorial Callias operators in \cite{cecchini2022scalar}. Inspired by this, we employ analogous methods to generalize the long neck principle in this paper. We state here our second result---a ``spectral long neck principle'' below.
\begin{theorem}\label{thm:main}
    Let $(N,g_N)$ be a compact $n$-dimensional Riemannian manifold with non-negative curvature operator $\mathcal{R}_N\geq 0$ on $\Lambda^2TN$, where $n\geq 2$ even. Let $(M,g_M)$ be a compact $m$-dimensional Riemannian spin manifold with non-empty boundary, where $m=n+4k$ for $k\in\mathbb{Z}_{\geq 0}$. Let $\Phi: M\to N$ be a smooth spin map which is locally constant near the boundary $\partial M$. Take $c>\frac{m-1}{4m}$ such that $\Lambda_c>0$. Assume that
\begin{itemize}
    \item[\rm (i)] the Euler characteristic $\chi(N)$ of $N$ is non-vanishing;
    \item[\rm (ii)] the scalar curvature 
    \begin{equation}\label{eq:scalar-comparison}
     {\rm scal}_{M} \geq ({\rm scal}_N\circ \Phi) \cdot \|\Lambda^2{\rm d}\Phi\|
    \end{equation}
    on ${\rm supp}({\rm d}\Phi)$;  
    \item[\rm (iii)] the length of the neck
    \begin{equation}\label{eq:spectral-bound-of-neck}
       {\rm dist}_{g_M}({\rm supp}({\rm d}\Phi),\partial M) > \pi\sqrt{\frac{c}{\Lambda_c}\left(\frac{(4c-1)m+2-4c}{(4c-1)m+1}\right)}.
    \end{equation}
\end{itemize}
Then ${\rm deg}_{\widehat{A}}(\Phi)=0$.
\end{theorem}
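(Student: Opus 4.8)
The plan is to argue by contradiction: assuming ${\rm deg}_{\widehat{A}}(\Phi)\neq 0$, I would build a spinorial Callias operator on $M$ whose index is $\pm\,{\rm deg}_{\widehat{A}}(\Phi)$, extract a nontrivial harmonic spinor in its kernel, and then show via a Lichnerowicz–Weitzenböck computation combined with the $c$-spectral constant that no such spinor can exist once the neck is as long as in \eqref{eq:spectral-bound-of-neck}. \emph{Step 1 (the twisting bundle and the operator).} Using hypothesis (i) and $\mathcal{R}_N\geq 0$, I would invoke the Goette–Semmelmann construction \cite{goette2002scalar}: on the even-dimensional $N$ there is a $\mathbb{Z}/2$-graded Clifford bundle $\mathcal{E}_N\to N$ with metric connection such that the associated twisted Dirac operator on $N$ has index $\pm\chi(N)$ and the twisting term in its Weitzenböck formula is $\geq -\tfrac14\,{\rm scal}_N$. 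Since $\Phi$ is a spin map, $\Phi^*\mathcal{E}_N$ tensored with the relative spinor bundle of $M$ produces a Dirac bundle $\mathcal{E}\to M$; the dimension shift $m=n+4k$ is absorbed by Clifford-algebra periodicity (equivalently, an auxiliary $\widehat{A}$-factor), which is exactly what replaces the ordinary degree by ${\rm deg}_{\widehat{A}}(\Phi)$, and the Llarull-type estimate for $\mathcal{E}_N$ yields the pointwise bound $\mathscr{R}^{\mathcal{E}}\geq -\tfrac14\,({\rm scal}_N\circ\Phi)\,\|\Lambda^2{\rm d}\Phi\|$, with $\mathscr{R}^{\mathcal{E}}=0$ wherever $\Phi$ is locally constant. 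Because $\Phi$ is locally constant near $\partial M$, the bundle $\mathcal{E}$ is flat there and pairs with a trivial flat bundle of the same rank into a relative Gromov–Lawson pair (in the sense of Section \ref{sec:preparations}); the associated Callias-type operator $\mathcal{D}=D_{\mathcal{E}}+\psi\,\Theta$ — with $\Theta$ an odd, $D_{\mathcal{E}}$-anticommuting bundle involution and $\psi$ an admissible real potential — is Fredholm with ${\rm ind}\,\mathcal{D}=\pm\,{\rm deg}_{\widehat{A}}(\Phi)$, independently of the admissible $\psi$. Assuming this is nonzero, after passing to the adjoint if necessary we obtain $0\neq\phi\in\ker\mathcal{D}$.

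\emph{Step 2 (Weitzenböck identity).} Since $\mathcal{D}^2=\nabla^*\nabla+\tfrac14\,{\rm scal}_M+\mathscr{R}^{\mathcal{E}}+\psi^2+{\rm cl}({\rm grad}\,\psi)\,\Theta$, integrating $\langle\mathcal{D}^2\phi,\phi\rangle=0$ and discarding the nonnegative boundary term gives
\[
0 = \int_M |\nabla \phi|^2 + \int_M\Bigl(\tfrac14\,{\rm scal}_M + \mathscr{R}^{\mathcal{E}} + \psi^2\Bigr)|\phi|^2 + \int_M \bigl\langle {\rm cl}({\rm grad}\,\psi)\,\Theta\,\phi , \phi \bigr\rangle .
\]
The last integrand is $\geq -|{\rm grad}\,\psi|\,|\phi|^2$, while hypothesis (ii) together with the bound on $\mathscr{R}^{\mathcal{E}}$ shows that $\tfrac14\,{\rm scal}_M+\mathscr{R}^{\mathcal{E}}\geq 0$ on ${\rm supp}({\rm d}\Phi)$ and equals $\tfrac14\,{\rm scal}_M$ off it, so the only negative contribution of the curvature terms is a bounded one, supported on ${\rm supp}({\rm d}\Phi)$.

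\emph{Step 3 (the spectral constant, refined Kato, and the neck ODE).} Because ${\rm scal}_M$ is not assumed positive, I would absorb its contribution on $M\setminus{\rm supp}({\rm d}\Phi)$ using \eqref{eq:c-spectral-constant}, testing the Rayleigh quotient against a cutoff of $|\phi|\in H_0^1(M^{\circ})$ supported there and paying the cutoff error against the spare nonnegativity of $\tfrac14\,{\rm scal}_M+\mathscr{R}^{\mathcal{E}}$ on ${\rm supp}({\rm d}\Phi)$; this yields $\int c\,{\rm scal}_M|\phi|^2\gtrsim \Lambda_c\int|\phi|^2-\int|{\rm d}|\phi||^2$ on the relevant region. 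Since $D_{\mathcal{E}}\phi=-\psi\,\Theta\phi$, a refined Kato inequality bounds $|{\rm d}|\phi||^2$ by $\tfrac{m-1}{m}|\nabla\phi|^2$ up to a multiple of $\psi^2|\phi|^2$; feeding this back into the displayed identity, the coefficient of $\int_M|\nabla\phi|^2$ becomes a positive multiple of $(4c-1)m+1$ — which is precisely why $c>\tfrac{m-1}{4m}$ is required — and, with $\psi=f\circ r$ for $r={\rm dist}_{g_M}(\,\cdot\,,{\rm supp}({\rm d}\Phi))$, one is left with the requirement that $f$ satisfy a Riccati-type differential inequality $|f'(r)|\leq A\,f(r)^2+B$ along the neck, with $A,B>0$ determined by $m$, $c$, and $\Lambda_c$, together with a strictly positive residual $\tfrac{\Lambda_c}{4c}\int_M|\phi|^2$. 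Solving this inequality (the extremal profile being a tangent, as in \cite{cecchini2022scalar,hirsch2023spectral}) shows that an admissible $f$ with the required boundary behaviour fits over an interval of length $\ell$ exactly when $\ell\leq \pi\sqrt{\tfrac{c}{\Lambda_c}\cdot\tfrac{(4c-1)m+2-4c}{(4c-1)m+1}}$; for such an $f$ the displayed identity forces $0>0$. This contradicts $\phi\neq 0$, so under \eqref{eq:spectral-bound-of-neck} we must have ${\rm deg}_{\widehat{A}}(\Phi)=0$.

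\emph{Main obstacle.} I expect the principal difficulty to be twofold. First, in Step 1, assembling the relative Gromov–Lawson pair on the manifold-with-boundary so that $\mathcal{D}$ is genuinely Fredholm with index exactly $\pm\,{\rm deg}_{\widehat{A}}(\Phi)$ — in particular verifying invertibility near $\partial M$ from the potential alone and correctly tracking the Clifford-algebraic dimension shift $m=n+4k$. Second, in Step 3, carrying out the spectral-constant absorption precisely on the region where $\mathscr{R}^{\mathcal{E}}$ is negative and pinning down the constants $A$ and $B$ (hence the interplay of the refined Kato constant $\tfrac{m-1}{m}$ with the $\Lambda_c$-normalisation and the geometry of ${\rm supp}({\rm d}\Phi)$) so that the Riccati optimisation reproduces exactly the sharp threshold in \eqref{eq:spectral-bound-of-neck} rather than a weaker bound.
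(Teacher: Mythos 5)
Your overall strategy (relative Gromov--Lawson pair, Callias operator with nonzero index, Weitzenb\"ock identity plus a spectral absorption of ${\rm scal}_M$) is the same as the paper's, but Step 3 has a genuine gap. The constant $\Lambda_c$ is a \emph{Dirichlet} eigenvalue: the infimum in \eqref{eq:c-spectral-constant} is over $H_0^1(M^{\circ})$, so it can only be tested against functions vanishing on $\partial M$. A kernel element $\phi$ of your Callias operator has no boundary decay, and your proposed fix --- cutting off $|\phi|$ away from ${\rm supp}({\rm d}\Phi)$ and ``paying the cutoff error against the spare nonnegativity on ${\rm supp}({\rm d}\Phi)$'' --- does not work geographically: the cutoff error and the un-absorbed term $c\,{\rm scal}_M|\phi|^2$ live in a neighborhood of $\partial M$, where ${\rm scal}_M$ has no pointwise lower bound (only $\Lambda_c>0$ is assumed) and where the only positive terms available are $\psi^2|\phi|^2$ and gradient terms whose size is already constrained by the Riccati inequality with constants depending only on $m,c,\Lambda_c$; nothing there can dominate an arbitrarily negative $\tfrac14{\rm scal}_M$. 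The paper (following Hirsch--Kazaras--Khuri--Zhang) resolves exactly this point by working with a \emph{sequence} of bounded Lipschitz potentials $f_j$ (cotangent profiles in the distance to $\partial M$, as in \eqref{eq:defn-potential}) blowing up on $\partial M$ as $j\to\infty$, normalizing $\max_{\Omega}|u_j|=1$, proving uniform $H^1$ bounds, and passing to a weak limit $|u|$ whose boundary trace vanishes because the boundary coefficient $\Upsilon_j\to+\infty$; only then is $|u|\in H_0^1(M^{\circ})$ and can $\Lambda_c$ be invoked, after which Fatou and the curvature estimate give the contradiction. A single fixed (even tangent-shaped, truncated) potential cannot produce this vanishing trace, so your argument as written cannot reach the spectral hypothesis at all; this limiting mechanism is the missing idea, not merely a matter of pinning down the constants $A,B$.

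A secondary, fixable issue is your choice of the second member of the Gromov--Lawson pair: ``a trivial flat bundle of the same rank'' does not in general give a globally defined Dirac bundle, because $\mathcal{S}_M$ exists only locally when $M$ is not spin ($\Phi$ is merely a spin map), so $\mathcal{S}_M\widehat{\otimes}\mathcal{F}$ is ill-defined for a trivial $\mathcal{F}$. The paper's Lemma \ref{lem:key} (via Lemma \ref{lem:relative-GL-pair}) instead takes $\mathcal{F}=\Psi^*\mathcal{S}_N$ for an auxiliary spin map $\Psi$ homotopic to $\Phi$, locally constant, and factoring through geodesics, so that both $\mathcal{S}_M\widehat{\otimes}\Phi^*\mathcal{S}_N$ and $\mathcal{S}_M\widehat{\otimes}\Psi^*\mathcal{S}_N$ are global, $\mathcal{F}$ is flat, and the relative index on the double is computed by Goette--Semmelmann as ${\rm deg}_{\widehat{A}}(\Phi)\cdot\chi(N)\neq 0$ (not $\pm\,{\rm deg}_{\widehat{A}}(\Phi)$, though this does not affect nonvanishing).
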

This result generalizes Theorem \ref{thm:cecchini2020longneck} by omitting the assumption $\|\Lambda^2{\rm d}\Phi\|_{\infty}\leq 1$ and using area contraction constant $\|\Lambda^2{\rm d}\Phi\|$ on ${\rm supp}({\rm d}\Phi)$ in the scalar curvature comparison inequality \eqref{eq:scalar-comparison}. Note that this is a significant improvement since at points where $\|\Lambda^2{\rm d}\Phi\|$ vanishes, \eqref{eq:scalar-comparison} becomes ${\rm scal}_{M}\geq 0$ on ${\rm supp}({\rm d}\Phi)$. Moreover, when $M$ is diffeomorphic to $N$ and $\Phi$ is the identity on $M$, the inequality \eqref{eq:scalar-comparison} is equivalent to (compare \cite{listing2010scalar})
\begin{equation}
\begin{aligned}
  {\rm scal}_{M}\geq 0\quad & \text{on}\quad {\rm supp}({\rm d}\Phi)\\
  \text{and}\quad ({\rm scal}_{M})^2\cdot g_M\geq ({\rm scal}_{N})^2\cdot g_N\quad & \text{on}\quad \Lambda^2T({\rm supp}({\rm d}\Phi)).
\end{aligned}
\end{equation}
For $m\geq 2$ even, if we assume the pointwise conditions ${\rm scal}_M \geq \sigma>0$ and the same neck length inequality as in \eqref{eq:cecchini2020necklength} instead of the above spectral assumptions, then it again yields that the $\widehat{A}$-degree of comparison map vanishes, from our spectral result by observing that $\Lambda_c\geq c \sigma$ and letting $c\to \infty$.

According to \cite[Theorem~0.1]{goette2002scalar}, we have the following direct consequences of Theorem \ref{thm:main}.
\begin{corollary}
Let $(N,g_N)$ be a compact $n$-dimensional Riemannian manifold with non-negative curvature operator $\mathcal{R}_N\geq 0$ on $\Lambda^2TN$, where $n\geq 2$ even, such that the universal covering of $N$ is homeomorphic to a symmetric space $G/H$ of compact type with ${\rm rank}\ G={\rm rank}\ H$. Let $(M,g_M)$ be a compact $m$-dimensional Riemannian spin manifold with non-empty boundary, where $m=n+4k$ for $k\in\mathbb{Z}_{\geq 0}$. Let $\Phi: M\to N$ be a smooth, area non-increasing, spin map which is locally constant near the boundary $\partial M$. Take $c>\frac{m-1}{4m}$ such that $\Lambda_c>0$. Assume that the scalar curvature ${\rm scal}_{M}\geq {\rm scal}_{N}\circ\Phi$ on ${\rm supp}({\rm d}\Phi)$ and the length of the neck
    \begin{equation}
       {\rm dist}_{g_M}({\rm supp}({\rm d}\Phi),\partial M) > \pi\sqrt{\frac{c}{\Lambda_c}\left(\frac{(4c-1)m+2-4c}{(4c-1)m+1}\right)}.
    \end{equation}
Then ${\rm deg}_{\widehat{A}}(\Phi)=0$.  
\end{corollary}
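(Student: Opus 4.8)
The plan is to derive the corollary as a direct specialization of Theorem~\ref{thm:main}: under the present hypotheses on $N$ and $\Phi$ I would check that conditions (i)--(iii) of Theorem~\ref{thm:main} all hold, and then invoke that theorem. The ambient structure is already in place --- $\Phi\colon M\to N$ is a smooth spin map which is locally constant near $\partial M$, $m=n+4k$ with $n\geq 2$ even, $\mathcal{R}_N\geq 0$ on $\Lambda^2TN$, and $c>\frac{m-1}{4m}$ is chosen with $\Lambda_c>0$ --- so it only remains to verify (i), (ii) and (iii).

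For (i) I would argue topologically. Since $G/H$ is a compact symmetric space and the universal cover $\widetilde N$ of $N$ is homeomorphic to $G/H$, the space $\widetilde N$ is compact; hence $\pi_1(N)$ is finite, say of order $d$, and $d$ is the degree of the covering $\widetilde N\to N$. By Hopf's theorem (cf.\ \cite{hopf1940satz}), the hypothesis ${\rm rank}\,G={\rm rank}\,H$ is equivalent to $\chi(G/H)\neq 0$, so $\chi(\widetilde N)=\chi(G/H)\neq 0$ and therefore $\chi(N)=\chi(\widetilde N)/d\neq 0$. This is exactly condition~(i).

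For (ii), note that $\mathcal{R}_N\geq 0$ forces every sectional curvature of $N$ to be non-negative, hence ${\rm scal}_N\geq 0$; and since $\Phi$ is area non-increasing, $\|\Lambda^2{\rm d}\Phi\|\leq 1$ at every point. Combining these with the assumed comparison ${\rm scal}_M\geq {\rm scal}_N\circ\Phi$ on ${\rm supp}({\rm d}\Phi)$ gives, on ${\rm supp}({\rm d}\Phi)$,
\[
({\rm scal}_N\circ\Phi)\cdot\|\Lambda^2{\rm d}\Phi\|\ \leq\ {\rm scal}_N\circ\Phi\ \leq\ {\rm scal}_M ,
\]
which is precisely inequality~\eqref{eq:scalar-comparison}, i.e.\ condition~(ii). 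Condition~(iii) is the neck-length bound \eqref{eq:spectral-bound-of-neck}, which is assumed here word for word (with the same $c$ and $\Lambda_c$). Having verified (i)--(iii), Theorem~\ref{thm:main} applies and yields ${\rm deg}_{\widehat{A}}(\Phi)=0$.

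I do not expect a genuine obstacle: all of the analytic substance --- the twist bundle over $N$ realizing $\pm\chi(N)$ as a twisted index while bounding the Weitzenb\"{o}ck curvature term in terms of ${\rm scal}_N$ via $\mathcal{R}_N\geq 0$ (in the spirit of \cite[Theorem~0.1]{goette2002scalar}), together with the spinorial Callias operator estimate on the geodesic collar --- is already contained in the proof of Theorem~\ref{thm:main}. The only points deserving a word of justification are the two elementary remarks used above: that a symmetric space of compact type with equal ranks has non-zero and finite Euler characteristic, so that $\chi(N)\neq 0$ descends along the finite covering $\widetilde N\to N$; and that $\mathcal{R}_N\geq 0$ implies ${\rm scal}_N\geq 0$, which is what makes the factor $\|\Lambda^2{\rm d}\Phi\|\in[0,1]$ work in our favour --- rather than against us --- in the scalar curvature comparison.
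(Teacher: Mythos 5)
Your proposal is correct and follows the same route as the paper, which states this corollary as a direct consequence of Theorem~\ref{thm:main}: the symmetric-space hypothesis with ${\rm rank}\,G={\rm rank}\,H$ serves exactly to guarantee $\chi(N)\neq 0$ (via Hopf--Samelson and multiplicativity of $\chi$ under the finite covering), while $\mathcal{R}_N\geq 0$ and the area non-increasing property give $({\rm scal}_N\circ\Phi)\cdot\|\Lambda^2{\rm d}\Phi\|\leq {\rm scal}_N\circ\Phi\leq {\rm scal}_M$ on ${\rm supp}({\rm d}\Phi)$, so conditions (i)--(iii) of Theorem~\ref{thm:main} hold verbatim.
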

\begin{corollary}\label{cor:spectral-long-neck-for-sphere}
Let $(M,g_M)$ be a compact $m$-dimensional Riemannian spin manifold with boundary, where $m\geq 2$ even, and let $\Phi:M\to S^m$ be a smooth area non-increasing map. Take $c>\frac{m-1}{4m}$ such that $\Lambda_c>0$. Assume that the scalar curvature ${\rm scal}_{M}\geq m(m-1)$ on ${\rm supp}({\rm d}\Phi)$ and the length of the neck
    \begin{equation}
       {\rm dist}_{g_M}({\rm supp}({\rm d}\Phi),\partial M) > \pi\sqrt{\frac{c}{\Lambda_c}\left(\frac{(4c-1)m+2-4c}{(4c-1)m+1}\right)}.
    \end{equation}
Then ${\rm deg}(\Phi)=0$. 
\end{corollary}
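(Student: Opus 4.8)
The plan is to obtain Corollary~\ref{cor:spectral-long-neck-for-sphere} as the special case of Theorem~\ref{thm:main} in which $(N,g_N)$ is the round sphere $S^m$, so that $n=m$ and $k=0$. First I would record that all the structural hypotheses of Theorem~\ref{thm:main} are met in this situation. The round metric on $S^m$ has positive curvature operator on $\Lambda^2 TS^m$, in particular $\mathcal{R}_{S^m}\geq 0$, and $n=m\geq 2$ is even. Since $M$ is assumed spin and $S^m$ is spin, we have $w_2(TM)=0=\Phi^\ast w_2(TS^m)$, so every smooth map $\Phi\colon M\to S^m$ is a spin map in the sense of Section~\ref{sec:preparations}. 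The hypothesis that $\Phi$ be locally constant near $\partial M$ is not assumed in the corollary, but it is forced: the neck inequality \eqref{eq:spectral-bound-of-neck} makes ${\rm dist}_{g_M}({\rm supp}({\rm d}\Phi),\partial M)$ strictly positive, hence ${\rm d}\Phi$ vanishes on a collar neighbourhood of $\partial M$, which is exactly the statement that $\Phi$ is locally constant there.

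Next I would check the three numbered conditions. For (i), $\chi(S^m)=2\neq 0$ because $m$ is even. For (ii), since ${\rm scal}_{S^m}\equiv m(m-1)$, the right-hand side of \eqref{eq:scalar-comparison} equals $m(m-1)\,\|\Lambda^2{\rm d}\Phi\|$ on ${\rm supp}({\rm d}\Phi)$; because $\Phi$ is area non-increasing we have $\|\Lambda^2{\rm d}\Phi\|\leq 1$ pointwise, so $m(m-1)\,\|\Lambda^2{\rm d}\Phi\|\leq m(m-1)\leq {\rm scal}_M$ on ${\rm supp}({\rm d}\Phi)$ by the curvature hypothesis of the corollary, and \eqref{eq:scalar-comparison} follows. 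Condition (iii) is verbatim the neck inequality assumed in the corollary, and the requirement $c>\frac{m-1}{4m}$ with $\Lambda_c>0$ is carried over unchanged. Applying Theorem~\ref{thm:main} therefore gives ${\rm deg}_{\widehat{A}}(\Phi)=0$.

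Finally I would convert the vanishing of the $\widehat A$-degree into the vanishing of the ordinary mapping degree. When $\dim M=\dim N$ (the case $k=0$), the twisting bundle over $N=S^m$ entering the definition of ${\rm deg}_{\widehat{A}}(\Phi)$ in Section~\ref{sec:preparations} is, up to a nonzero universal constant and a nonzero rank factor, a bundle whose relevant characteristic number on $S^m$ is its Euler number; the Atiyah--Singer computation of the index of the corresponding $\Phi$-twisted Dirac operator on $M$ then yields ${\rm deg}_{\widehat{A}}(\Phi)=\lambda\,{\rm deg}(\Phi)$ for a nonzero rational number $\lambda$. This is precisely the identification already invoked after Theorem~\ref{thm:generalized-pointwise-long-neck-principle} --- namely, that for $\dim M=\dim N$ with $N$ a unit sphere one recovers Theorem~\ref{thm:CZ-result}. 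Hence ${\rm deg}(\Phi)=0$, as claimed. The only step that is not a pure substitution of hypotheses is this last normalization identity ${\rm deg}_{\widehat A}(\Phi)=\lambda\,{\rm deg}(\Phi)$, so that is the point I would treat with the most care; the rest is bookkeeping against the hypotheses of Theorem~\ref{thm:main}.
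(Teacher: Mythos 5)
Your proposal is correct and follows essentially the route the paper intends: the corollary is the specialization of Theorem~\ref{thm:main} to $N=S^m$ (so $n=m$, $k=0$), using that $\mathcal{R}_{S^m}\geq 0$, $\chi(S^m)=2\neq 0$, that area non-increasing means $\|\Lambda^2{\rm d}\Phi\|\leq 1$ so ${\rm scal}_M\geq m(m-1)\geq m(m-1)\|\Lambda^2{\rm d}\Phi\|$ on ${\rm supp}({\rm d}\Phi)$ gives \eqref{eq:scalar-comparison}, that spinness of $M$ and $S^m$ makes $\Phi$ a spin map, and that the strictly positive distance ${\rm dist}_{g_M}({\rm supp}({\rm d}\Phi),\partial M)>0$ forces $\Phi$ to be locally constant near $\partial M$. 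The only remark is that your final normalization step is more elaborate than needed: since $\dim M=\dim N$, the paper's definition of the $\widehat{A}$-degree gives ${\rm deg}_{\widehat{A}}(\Phi)={\rm deg}(\Phi)$ directly (only the degree-zero term $1$ of $\widehat{A}(M)$ contributes to $\int_M\Phi^*\omega\wedge\widehat{A}(M)$), so no index-theoretic constant $\lambda$ is required.
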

Corollary \ref{cor:spectral-long-neck-for-sphere} generalizes Theorem \ref{thm:cecchini2020longneck}, by noting that $\Lambda_c\geq c\sigma$ under the pointwise bound ${\rm scal_M}\geq \sigma>0$ and then letting $c\to \infty$. In fact, the pointwise version of Corollary \ref{cor:spectral-long-neck-for-sphere} also holds in the odd-dimensional case. This can be seen by considering the product manifold $M\times S^1$.

Let us now consider another conjecture proposed by Gromov, which is related to the long neck problem. Specifically,
\begin{conjecture}[{\cite[11.12, Conjecture D']{gromov2018metric}}, Geodesic Collar Neighborhood Problem]\label{conj:geodesic-collar-neighborhood}
Let $W$ be a closed $m$-dimensional manifold such that $W$ minus a point does not admit a complete metric of positive scalar curvature. Let $M$ be the manifold with boundary obtained from $W$ by removing a small $m$-dimensional open ball. Let $g_M$ be a Riemannian metric on $M$ such that ${\rm scal}_{M}\geq \sigma>0$. Then there exists a constant $c>0$ such that if there exists an open geodesic collar neighborhood $\mathcal{N}$ of width $\rho$, then
\begin{equation}
\rho\leq\frac{c}{\sqrt{\sigma}}.
\end{equation} 
\end{conjecture}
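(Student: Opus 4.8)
The plan is to argue by contradiction and reduce the statement to an index computation for a spinorial Callias operator, in the spirit of Theorem~\ref{thm:main} and \cite{cecchini2022scalar}; throughout I work in the spin category, the setting of this note. Suppose $W$ is spin, that $M=W\setminus B^m$ carries a metric $g_M$ with $\mathrm{scal}_M\geq\sigma>0$, and that there is an open geodesic collar neighbourhood $\mathcal{N}$ of $\partial M$ of width $\rho$ with $\rho$ larger than a threshold $\rho_0=c_m/\sqrt{\sigma}$, $c_m$ a dimensional constant to be determined. Being a \emph{geodesic} collar means that the normal exponential map identifies $\mathcal{N}$ with $\partial M\times[0,\rho)$, that $g_M=dt^2+h_t$ there with $h_0=g_{\partial M}$, and—crucially—that $t=\mathrm{dist}_{g_M}(\,\cdot\,,\partial M)$ is a smooth function on $\mathcal{N}$ with $|\nabla t|\equiv 1$ whose level sets foliate $\mathcal{N}$; this regularity of the radial coordinate is precisely what will let a potential depending only on $t$ be controlled.

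Next I would attach to $\partial M$ an infinite half-cylinder $\partial M\times(-\infty,0]$ carrying a complete metric agreeing with $g_M$ near $t=0$, obtaining a complete spin manifold $\widehat{M}$. Collapsing the collar shows that $\widehat{M}$ is diffeomorphic to $W$ with a point removed, so by hypothesis $\widehat{M}$ admits no complete metric of positive scalar curvature. The key is to convert this qualitative fact into the non-vanishing of a numerical index. I would do so by means of a \emph{relative Gromov--Lawson pair} for $(M,g_M)$ relative to the background $W$—the construction announced in the abstract and developed in Section~\ref{sec:preparations}—comparing $\widehat{M}$ with a reference completion modelled on $\mathbb{R}^m$, so that the relative index is a diffeomorphism invariant that must be non-zero and that, by construction, will equal the index of the Callias operator below. (When one strengthens the hypothesis to $\widehat{A}(W)\neq 0$, or to enlargeability of $W$, this is just the classical Lichnerowicz--Gromov--Lawson obstruction, and $\mathcal{D}_{\widehat{M}}$ is then twisted by the appropriate almost-flat bundle.)

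On $\widehat{M}$ form the self-adjoint Callias operator $\mathcal{D}_f=\mathcal{D}_{\widehat{M}}+f\,\Psi$, where $\mathcal{D}_{\widehat{M}}$ is the (possibly twisted) spin Dirac operator, $\Psi$ is the parallel bundle symmetry anticommuting with $\mathcal{D}_{\widehat{M}}$ that turns it into a Callias operator, and the potential $f$ depends only on $t$: it equals a large positive constant on the cylindrical end $\{t\leq 0\}$, decreases monotonically across the geodesic sub-collar $\{0\leq t\leq\rho\}$, and equals a large negative constant on the rest of $M$. Since the entire variation happens over a $t$-interval of length $\rho$ and $|\nabla t|\equiv 1$, one can choose the profile so that $|\nabla f|\leq\pi/\rho$ after the optimal reparametrisation—the extremal profile solving a first-order ODE of the type $f'=\tfrac{m}{4(m-1)}\sigma-\dots$, where a $\tan$ and the sharp $c_m$ enter exactly as in the proof of Theorem~\ref{thm:CZ-result}. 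Because $f^2$ is bounded below by a positive constant on the cylindrical end, $\mathcal{D}_f$ is Fredholm, and the Callias index theorem identifies $\mathrm{ind}\,\mathcal{D}_f$ with the non-zero invariant of the preceding paragraph.

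Finally, the Lichnerowicz--Weitzenböck--Callias identity
\[
\mathcal{D}_f^{\,2}=\nabla^{*}\nabla+\tfrac{1}{4}\,\mathrm{scal}_{\widehat{M}}+f^2+c(\nabla f)\,\Psi+\mathfrak{R}_{\mathrm{tw}},\qquad \|c(\nabla f)\,\Psi\|\leq|\nabla f|,
\]
together with $\mathrm{scal}_{\widehat{M}}\geq\sigma$ off the sub-collar (and $\mathfrak{R}_{\mathrm{tw}}$ either absent, for the untwisted Dirac operator, or non-negative by almost-flatness of the twisting bundle), reduces the invertibility of $\mathcal{D}_f$ to the pointwise inequality $f^2-|\nabla f|\geq\varepsilon>0$ inside the sub-collar; this is an elementary one-variable comparison that holds precisely when $\rho>\rho_0$. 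Hence $\mathcal{D}_f^{\,2}\geq\varepsilon>0$, so $\mathcal{D}_f$ is invertible and $\mathrm{ind}\,\mathcal{D}_f=0$, contradicting the previous step; therefore $\rho\leq\rho_0=c_m/\sqrt{\sigma}$. The step I expect to be the genuine obstacle is the second one—turning ``$W$ minus a point is not PSC-complete'' into a concrete integer (or $KO$-class) attached to the compact manifold-with-boundary $(M,g_M)$ and stable under the cylindrical completion. This is exactly what the relative Gromov--Lawson pair is meant to accomplish, and checking its excision and additivity properties, so that the invariant survives the completion and equals $\mathrm{ind}\,\mathcal{D}_f$, is the crux; by contrast the curvature estimate is a routine ODE comparison of the kind already carried out for Theorems~\ref{thm:cecchini2020longneck}, \ref{thm:CZ-result} and \ref{thm:main}.
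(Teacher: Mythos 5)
The statement you are proving is not actually proved in this paper: it is Gromov's conjecture, recorded as background, and the author only cites Cecchini's work for its resolution; the paper's own contribution in this direction (Theorem \ref{thm:spectral-geodesic-collar-neighborhood}) replaces the qualitative hypothesis by the assumption that the double of $M$ has infinite $\widehat{A}$-area. Measured against that, your quantitative part (attaching a cylindrical end, a Callias potential varying only in the distance-to-boundary coordinate across the collar, the Weitzenb\"ock--Penrose estimate giving a threshold of the form $c_m/\sqrt{\sigma}$) is the standard and essentially correct mechanism, the same one used in \cite{cecchini2020along} and \cite{cecchini2022scalar}.

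The genuine gap is exactly the step you flag but then claim the relative Gromov--Lawson pair will handle: converting ``$W$ minus a point admits no complete psc metric'' into a non-vanishing index. No such conversion is available. Non-existence of psc metrics is a qualitative statement that is not known to be detected by any Fredholm index, and in general it is not: counterexamples to the unstable Gromov--Lawson--Rosenberg conjecture give closed spin manifolds with no psc metric but vanishing index obstructions, so an argument that manufactures a non-zero index from the hypothesis alone cannot exist. The relative Gromov--Lawson pair of this paper does not do this either: it is built from two spin maps $\Phi,\Psi\colon M\to N$ into a background $N$ with $\chi(N)\neq 0$ and non-zero $\widehat{A}$-degree (Lemma \ref{lem:relative-GL-pair}), or, in the collar setting, from the infinite $\widehat{A}$-area hypothesis (Lemma \ref{lem:technical-lemma}); neither input is supplied by the conjecture's hypothesis. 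This is why all known results (Cecchini's Theorems B--C, Theorem \ref{thm:CZ-geodesic-collar-neighborhood}, Theorem \ref{thm:spectral-geodesic-collar-neighborhood}) assume an index-theoretic condition (non-vanishing Rosenberg index, infinite $\widehat{A}$-area) in place of the qualitative one; you also quietly add a spin assumption on $W$ that the conjecture does not make. As written, your proof establishes (a version of) the known index-hypothesis theorems, not the conjecture itself, and the missing reduction is not a technical check of excision and additivity but an open problem.
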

Here observe that $M$ is a manifold with boundary $\partial M=S^{n-1}$ which is homeomorphic to $S^{n-1}\times [0,1]$. For $\rho>0$ small enough, an open geodesic collar neighborhood $\mathcal{N}$ of width $\rho$ is the $\rho$-neighborhood of $\partial M$ with respect to $g_M$.

This conjecture was solved and extended to a higher version \cite[Theorem~B, Theorem~C]{cecchini2020along} by Cecchini. Subsequently, Cecchini and Zeidler \cite{cecchini2022scalar} refined the upper bound of Conjecture \ref{conj:geodesic-collar-neighborhood} in the case of an even-dimensional spin manifold whose double has infinite $\widehat{A}$-area (for precise definition and more examples of the $\widehat{A}$-area see \cite[Definition~1.6]{cecchini2022scalar}) using information from the mean curvature of the boundary.
\begin{theorem}[{\cite[Theorem~1.7]{cecchini2022scalar}}]\label{thm:CZ-geodesic-collar-neighborhood}
\rm Let $(M,g_M)$ be a compact $m$-dimensional Riemannian spin manifold with boundary such that the double of $M$ has infinite $\widehat{A}$-area. Suppose that ${\rm scal}_{M}>0$ and that there exist positive constants $\kappa$ and $\ell$, with $0<\ell<\pi/(\sqrt{\kappa} m)$, such that the mean curvature
\[
    H \geq -\sqrt{\kappa}\tan\left(\frac{\sqrt{\kappa}m \ell}{2}\right).
\]
Then the boundary $\partial M$ admits no open geodesic collar neighborhood $\mathcal{N}\subset M$ of width strictly greater than $\ell$ such that ${\rm scal}_{M}\geq \kappa m(m-1)$ on $\mathcal{N}$. 
\end{theorem}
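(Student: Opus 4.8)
The plan is to run the spinorial Callias operator argument underlying Theorems~\ref{thm:cecchini2020longneck} and~\ref{thm:CZ-result} in the ``collar'' situation, where the role of the comparison map $\Phi$ is played by a twisting bundle of arbitrarily small curvature furnished by the infinite $\widehat A$-area hypothesis. Suppose, for contradiction, that $\partial M$ admits an open geodesic collar neighborhood $\mathcal N$ of width $\rho>l$ with ${\rm scal}_M\ge\kappa m(m-1)$ on $\mathcal N$; via the normal exponential map identify $\mathcal N\cong\partial M\times[0,\rho)$ and let $t$ denote the distance to $\partial M$. Since the double $\widehat M=M\cup_{\partial M}M$ has infinite $\widehat A$-area, for every $\varepsilon>0$ there is a Hermitian bundle with metric connection $(E,\nabla^E)\to\widehat M$ with $\|R^{\nabla^E}\|_{\infty}\le\varepsilon$ and non-vanishing associated (relative) twisted index. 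Restrict $E$ to $M$ and work with the $\mathrm{Cl}_m$-linear spinor bundle $\mathcal S_M\otimes E$ and its Dirac operator $D_E$ (the $\mathrm{Cl}_m$-linear set-up treats all parities of $m$ uniformly; for $m$ odd one may instead pass to $M\times S^1$, as indicated after Corollary~\ref{cor:spectral-long-neck-for-sphere}).

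Next, introduce the Callias-type operator $\mathcal B=D_E+\psi\,\mathcal T$, where $\mathcal T$ is a suitable self-adjoint symmetry (an odd involution anticommuting with Clifford multiplication, realised near $\partial M$ through Clifford multiplication by the collar direction) and $\psi\in C^{\infty}(M)$ is a potential that on $\mathcal N$ depends only on $t$. The profile $\psi|_{\mathcal N}=\varphi(t)$ is modelled on the radial mean curvature of geodesic spheres in the spherical cap of constant curvature $\kappa$: take $\varphi$ to be the solution of the associated Riccati-type ODE that blows up to $+\infty$ as $t\uparrow l$ (legitimate since $l<\rho$, so $\varphi$ need only be defined up to depth $l$) and whose initial value $\varphi(0)=\tfrac{\sqrt\kappa}{2}\tan\!\big(\tfrac{\sqrt\kappa m l}{2}\big)$ is exactly the quantity in the mean curvature hypothesis. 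On $M\setminus\mathcal N$, where only ${\rm scal}_M>0$ is known, extend $\psi$ to a smooth function equal to a large positive constant outside a slightly enlarged collar, transitioning to that constant inside $\{\,l\le t<\rho\,\}$ where $\varphi$ is already large; this keeps the potential large and slowly varying there and makes it admissible for the index identification below. Finally, impose on $\mathcal B$ the local (chirality-type) elliptic boundary condition determined by $\mathcal T$ and the inward normal, whose Green's formula carries the mean curvature.

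The heart of the matter is the Schr\"{o}dinger--Lichnerowicz identity for $\mathcal B$ with this boundary condition: for $\sigma$ in its domain it has the shape
\[
\int_M|\mathcal B\sigma|^2=\int_M|\nabla\sigma|^2+\int_M\Big(\tfrac14{\rm scal}_M+\psi^2\pm c(\mathrm{grad}\,\psi)\,\mathcal T+\mathcal R^{E}\Big)|\sigma|^2+\int_{\partial M}\Big(\tfrac{H}{2}+\psi|_{\partial M}\Big)|\sigma|^2,
\]
where $\mathcal R^{E}$, built from $R^{\nabla^E}$, is pointwise bounded by $\varepsilon$. On the collar the bundle bracket is bounded below using ${\rm scal}_M\ge\kappa m(m-1)$ and the Riccati equation for $\varphi$; off the collar it is bounded below using ${\rm scal}_M>0$ and the fact that $\psi^2$ is large while $\mathrm{grad}\,\psi$ is small; fixing $\varepsilon$ small absorbs $\mathcal R^{E}$ in both regions. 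The boundary bracket $\tfrac{H}{2}+\psi|_{\partial M}=\tfrac{H}{2}+\tfrac{\sqrt\kappa}{2}\tan(\tfrac{\sqrt\kappa m l}{2})$ is non-negative precisely by $H\ge-\sqrt\kappa\tan(\tfrac{\sqrt\kappa m l}{2})$. Hence the right-hand side dominates a positive multiple of $\int_M|\sigma|^2$ on an open set, so $\mathcal B\sigma=0$ forces $\sigma\equiv0$ by unique continuation; the same argument for the adjoint boundary-value problem gives vanishing cokernel, whence $\mathrm{ind}(\mathcal B)=0$. The step we expect to be the main obstacle is the bookkeeping here: choosing the Clifford type of $\mathcal T$ and the exact profile $\varphi$ so that the interior bracket stays non-negative while the boundary value is exactly $\tfrac{\sqrt\kappa}{2}\tan(\tfrac{\sqrt\kappa m l}{2})$ and the blow-up of $\varphi$ sits exactly at depth $l$ — this is where the dimensional constants and the constraint $l<\pi/(\sqrt\kappa m)$ are consumed, and controlling the $c(\mathrm{grad}\,\psi)$ loss may require a modified spinor connection or a twistor-operator refinement of $\int_M|\nabla\sigma|^2$ in place of the naive Kato inequality.

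To close the argument, note that since $\psi$ is invertible (large) outside a compact collar region and the boundary condition is the standard chirality one, the relative index theorem for Callias-type operators identifies $\mathrm{ind}(\mathcal B)$ with the twisted index of $D_E$ on the closed double $\widehat M$, which is non-zero by the infinite $\widehat A$-area hypothesis once $\varepsilon$ has been fixed small enough to run the estimate. This contradicts $\mathrm{ind}(\mathcal B)=0$, so $\partial M$ admits no open geodesic collar neighborhood of width strictly greater than $l$ with ${\rm scal}_M\ge\kappa m(m-1)$.
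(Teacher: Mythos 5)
Your overall strategy (Callias operator with a $\tan$-type potential, contradiction with a non-vanishing index coming from the infinite $\widehat{A}$-area of the double) is the right one --- note that this paper does not prove Theorem \ref{thm:CZ-geodesic-collar-neighborhood} at all, but quotes it from \cite{cecchini2022scalar} and proves only the spectral analogue, Theorem \ref{thm:spectral-geodesic-collar-neighborhood}, by the same machinery. However, your write-up has two genuine gaps, and they sit exactly where you defer to ``bookkeeping''. First, the quantitative heart: with the naive Schr\"odinger--Lichnerowicz identity you write down, the interior bracket $\tfrac14\mathrm{scal}_M+\psi^2-|\nabla\psi|$ cannot be made non-negative for collar depths up to $\pi/(\sqrt{\kappa}m)$; any Riccati-saturating profile $\psi=a\tan(a\,\cdot\,)$ forces $\psi^2-|\psi'|=-a^2$, and $\mathrm{scal}_M\geq\kappa m(m-1)$ only absorbs $a\leq\sqrt{\kappa m(m-1)}/2$, which does not reproduce the constants of the theorem. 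The sharp constants require the Penrose-operator (Friedrich) refinement, i.e.\ the factor $\tfrac{m}{m-1}$ in front of $|\mathcal{P}u|^2+\langle u,\tfrac{\mathrm{scal}_M}{4}u+\mathscr{R}u\rangle$ exactly as in the identity quoted in the proof of Theorem \ref{thm:main} (CZ's Equation 4.4); with that identity the boundary term carries the coefficient $\tfrac{m}{2}H$, so the potential's boundary value must be $\tfrac{m\sqrt{\kappa}}{2}\tan(\tfrac{\sqrt{\kappa}ml}{2})$, not $\tfrac{\sqrt{\kappa}}{2}\tan(\tfrac{\sqrt{\kappa}ml}{2})$ as you state, and the correct profile is $f(t)=\tfrac{m\sqrt{\kappa}}{2}\tan\bigl(\tfrac{m\sqrt{\kappa}}{2}(l-t)\bigr)$ on $[0,l]$, extended by $0$ --- finite at $\partial M$ and vanishing at depth $l$. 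Your profile, which blows up at depth $l$ and is then ``transitioned to a large constant'' on $\{l\leq t<\rho\}$, is internally inconsistent (a function tending to $+\infty$ at $t=l$ admits no such extension), and a blow-up in the interior is neither needed nor compatible with the estimate there.

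Second, the index-theoretic input is not correctly set up. Infinite $\widehat{A}$-area is exploited through a Gromov--Lawson \emph{pair} $(\mathcal{E},\mathcal{F})$ on $M$ with a \emph{parallel} unitary isomorphism on a collar, small curvature, and non-zero relative index --- this is precisely Lemma \ref{lem:technical-lemma} (CZ Lemma 6.9) --- and the potential enters as $f\sigma$ with $\sigma$ built from that isomorphism, an involution that \emph{commutes} with Clifford multiplication; the identity $\mathrm{ind}(\mathcal{B}_{f,-1})=\mathrm{indrel}(M;\mathcal{E},\mathcal{F})$ of \eqref{eq:GGL} is what transfers non-vanishing from the double. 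Your single bundle $E$ restricted to $M$ together with an involution $\mathcal{T}$ anticommuting with Clifford multiplication and ``realised near $\partial M$ by the collar direction'' does not furnish an admissible Callias potential of this type, and the relative index theorem you invoke does not apply to it as stated; without the pair structure (in particular the flat/matching second bundle), the claimed equality of $\mathrm{ind}(\mathcal{B})$ with the twisted index on the double is unjustified. Repairing the argument means replacing your $(E,\mathcal{T})$ by the pair of Lemma \ref{lem:technical-lemma}, using the boundary condition $s\nu\cdot\sigma u=u$ with $s=-1$, and running the refined (Penrose) estimate with the corrected potential; at that point you are reproducing the Cecchini--Zeidler proof, which is also the template for this paper's proof of Theorem \ref{thm:spectral-geodesic-collar-neighborhood}.
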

The width estimate in Theorem \ref{thm:CZ-geodesic-collar-neighborhood} is also sharp, cf. \cite[Remark~1.13]{cecchini2022scalar}. In particular, this result implies that a manifold with boundary whose double has infinite $\widehat{A}$-area cannot carry any metric of positive scalar curvature and mean convex boundary, compare \cite[Theorem~19]{baer2020boundary}. Here we establish our third theorem, a spectral width inequality of geodesic collar neighborhood.
\begin{theorem}\label{thm:spectral-geodesic-collar-neighborhood}
Let $(M,g_M)$ be a compact $m$-dimensional Riemannian spin manifold with boundary such that the double of $M$ has infinite $\widehat{A}$-area. Take $c>\frac{m-1}{4m}$ such that $\Lambda_c>0$. Then the width of an open geodesic collar neighborhood $\mathcal{N}$ of the boundary $\partial M$ 
\begin{equation}
{\rm width}(\mathcal{N}) \leq  \pi\sqrt{\frac{c}{\Lambda_c}\left(\frac{(4c-1)m+2-4c}{(4c-1)m+1}\right)}.
\end{equation}
\end{theorem}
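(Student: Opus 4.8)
The plan is to deduce Theorem \ref{thm:spectral-geodesic-collar-neighborhood} from the spectral long neck principle of Theorem \ref{thm:main} by a doubling-and-degree argument, in the spirit of how Cecchini--Zeidler pass from Theorem \ref{thm:CZ-result} to Theorem \ref{thm:CZ-geodesic-collar-neighborhood}. Let $\mathcal{N}$ be an open geodesic collar neighborhood of $\partial M$, so $\mathcal{N}\cong \partial M\times[0,\rho)$ with $\rho={\rm width}(\mathcal{N})$, and suppose for contradiction that $\rho > \pi\sqrt{\tfrac{c}{\Lambda_c}\big(\tfrac{(4c-1)m+2-4c}{(4c-1)m+1}\big)}$. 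The double $DM=M\cup_{\partial M}M$ is a closed spin manifold of infinite $\widehat{A}$-area; by definition of the $\widehat{A}$-area, for every $\varepsilon>0$ there is an area-decreasing (indeed $\varepsilon$-area-contracting) map $\Phi_0\colon DM\to S^m$ which is constant near a chosen point and has $\widehat{A}$-degree (i.e.\ $\langle \widehat{A}(DM)\smile \Phi_0^*[S^m]^*,[DM]\rangle$, or the corresponding integer after connect-summing) non-zero.

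First I would restrict and modify such a map so that it lives on $M$ and is locally constant near $\partial M$. Since $\Phi_0$ can be arranged to be constant on the ``second copy'' of $M$ inside $DM$ and in a collar of the gluing hypersurface, its restriction $\Phi\colon M\to S^m$ is a smooth map, locally constant near $\partial M$, with ${\rm supp}({\rm d}\Phi)$ contained in the interior part of $M$ away from the collar $\mathcal{N}$, and with $\widehat{A}$-degree equal to that of $\Phi_0$, hence nonzero. Second, because $\Phi$ is built to be $\varepsilon$-area-contracting we have $\|\Lambda^2{\rm d}\Phi\|\le \varepsilon$ on ${\rm supp}({\rm d}\Phi)$, so the scalar curvature comparison \eqref{eq:scalar-comparison} with target $N=S^m$ reads ${\rm scal}_M\ge {\rm scal}_{S^m}\cdot \|\Lambda^2{\rm d}\Phi\|=m(m-1)\varepsilon$ on ${\rm supp}({\rm d}\Phi)$; since $M$ is compact and ${\rm scal}_M$ is continuous, this holds once $\varepsilon$ is small enough, after possibly shrinking to the region where ${\rm scal}_M$ might be small — more precisely one chooses $\varepsilon$ below $\inf_{{\rm supp}({\rm d}\Phi)}{\rm scal}_M / (m(m-1))$ if that infimum is positive, and handles the general case by noting ${\rm scal}_M>0$ is not even assumed here, so one instead invokes that $\Lambda_c>0$ already encodes the relevant positivity and that \eqref{eq:scalar-comparison} is an honest pointwise inequality one arranges by taking $\varepsilon\to 0$. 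Third, since ${\rm supp}({\rm d}\Phi)$ is disjoint from the collar $\mathcal{N}$ and $\partial M$ sits at the far end of $\mathcal{N}$, the geodesic collar gives ${\rm dist}_{g_M}({\rm supp}({\rm d}\Phi),\partial M)\ge \rho > \pi\sqrt{\tfrac{c}{\Lambda_c}\big(\tfrac{(4c-1)m+2-4c}{(4c-1)m+1}\big)}$, which is precisely the neck hypothesis \eqref{eq:spectral-bound-of-neck} of Theorem \ref{thm:main}.

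With $N=S^m$, $k=0$, $\chi(S^m)\ne0$, all hypotheses (i)--(iii) of Theorem \ref{thm:main} are then met, so ${\rm deg}_{\widehat{A}}(\Phi)=0$; but in the case $\dim M=\dim N$ this $\widehat{A}$-degree is the ordinary degree, hence the $\widehat{A}$-degree of $\Phi_0$ vanishes, contradicting the choice of $\Phi_0$ coming from infinite $\widehat{A}$-area. Therefore no collar of width exceeding the stated bound exists, which is the claim.

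The delicate point — and the step I expect to require the most care — is the construction of the auxiliary map $\Phi$ on $M$ with all three properties simultaneously: locally constant near $\partial M$, nonzero $\widehat{A}$-degree, and area-contraction constant small enough that \eqref{eq:scalar-comparison} holds on ${\rm supp}({\rm d}\Phi)$. The infinite $\widehat{A}$-area hypothesis on $DM$ is exactly what makes this possible: it yields, for arbitrarily small $\varepsilon$, an $\varepsilon$-contracting map $DM\to S^m$ of nonzero $\widehat{A}$-degree, and one must check that the standard surgery/cutoff making it constant on one half of the double neither destroys the degree nor increases the contraction constant beyond a controlled multiple of $\varepsilon$. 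One also has to be slightly careful that ${\rm supp}({\rm d}\Phi)$ genuinely stays outside $\mathcal{N}$, which follows by performing the modification inside the second copy of $M$ and a thin collar of $\partial M$ in the first copy that can be taken inside $\mathcal{N}$, so that the remaining support lies in $M\setminus\mathcal{N}$. Once this is in place, the rest is a direct application of Theorem \ref{thm:main} together with the identification of ${\rm deg}_{\widehat{A}}$ with the ordinary degree when the dimensions agree.
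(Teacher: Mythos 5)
Your reduction to Theorem \ref{thm:main} breaks down at its very first step: you read ``the double of $M$ has infinite $\widehat{A}$-area'' as providing, for every $\varepsilon>0$, an $\varepsilon$-area-contracting map $\Phi_0\colon {\rm d}M\to S^m$ of non-zero degree. That is the definition of (area-)enlargeability, not of infinite $\widehat{A}$-area. In the sense of Cecchini--Zeidler (the notion the paper uses), infinite $\widehat{A}$-area means that for every $\varepsilon>0$ there is a Hermitian bundle $E$ with metric connection on ${\rm d}M$ with $\|R^{E}\|_\infty<\varepsilon$ and non-vanishing twisted index $\int \widehat{A}({\rm d}M)\wedge{\rm ch}(E)$; such almost-flat bundles need not arise as pull-backs of spinor bundles under maps to $S^m$, and that extra generality is precisely the point of the hypothesis. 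So your argument has no starting object. Moreover, even granting such maps, two further steps fail. First, hypothesis (ii) of Theorem \ref{thm:main} is the pointwise inequality ${\rm scal}_M\geq m(m-1)\,\|\Lambda^2{\rm d}\Phi\|$ on ${\rm supp}({\rm d}\Phi)$; Theorem \ref{thm:spectral-geodesic-collar-neighborhood} assumes only $\Lambda_c>0$, which is compatible with ${\rm scal}_M<0$ on part of $M$, and at such points no choice of small $\varepsilon>0$ makes the inequality true (and $\varepsilon=0$ is useless, since a map with $\Lambda^2{\rm d}\Phi\equiv 0$ has zero degree). Your remark that ``$\Lambda_c>0$ already encodes the relevant positivity'' does not produce the pointwise hypothesis that Theorem \ref{thm:main} actually requires. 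Second, the modification making $\Phi_0$ constant on the second copy of $M$ and pushing all of ${\rm supp}({\rm d}\Phi)$ into $M\setminus\mathcal{N}$ while keeping both the degree non-zero and the contraction constant of order $\varepsilon$ is exactly the kind of compression that destroys area-contraction bounds; you flag it as delicate but give no construction, and in general it is not available.

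The paper's proof avoids all of this by staying at the level of bundles: Lemma \ref{lem:technical-lemma} (from Cecchini--Zeidler), combined with the equivalence between infinite $\widehat{A}$-area of ${\rm d}M$ and infinite relative $\widehat{A}$-area of $(M,\partial M)$, furnishes a Gromov--Lawson pair $(\mathcal{E},\mathcal{F})$ supported in $K_\Lambda=M\setminus\mathcal{N}_\Lambda$ with $\|\mathscr{R}^{\mathcal{E}\oplus\mathcal{F}}\|_\infty$ smaller than $\Lambda_c/(4c)$ (and than the constant $C_\varepsilon$) and with non-vanishing Callias index. One then reruns the spectral estimate from the \emph{proof} of Theorem \ref{thm:main} -- the potentials $f_j$, the kernel elements $u_j$, and inequality \eqref{eq:estimate-1} -- where the role of the scalar-curvature comparison is played by the smallness of the bundle curvature relative to $\Lambda_c/c$, with no map to $S^m$ and no pointwise positivity of ${\rm scal}_M$ ever needed. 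If you want to salvage your strategy, you would have to generalize Theorem \ref{thm:main} from the pair $(\Phi^*\mathcal{S}_N,\Psi^*\mathcal{S}_N)$ to arbitrary Gromov--Lawson pairs with curvature bounded by $\Lambda_c/(4c)$ on the support -- which is in effect what the paper's proof does directly.
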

If we assume the pointwise bound ${\rm scal}_M \geq \sigma>0$ and note that $\Lambda_c\geq c \sigma$, then applying Theorem \ref{thm:spectral-geodesic-collar-neighborhood} while letting $c\to \infty$ we get the pointwise width estimate ${\rm width}(\mathcal{N})\leq \pi\sqrt{\frac{m-1}{m\sigma}}$.

Throughout this paper all manifolds are assumed to be smooth, oriented and connected. 

The paper is organized as follows. In Section \ref{sec:preparations}, we provide some necessary notions and basic facts of technical preparations. In Section \ref{sec:proof-of-theorems}, first of all, we prove Theorem \ref{thm:generalized-pointwise-long-neck-principle} using a crucial lemma---Lemma \ref{lem:relative-GL-pair} of the construction of a relative Gromov-Lawson pair on a compact manifold with boundary, relative to a background manifold. Next, we prove Theorem \ref{thm:main} by Lemma \ref{lem:relative-GL-pair} in the spectral setting. Finally, we prove Theorem \ref{thm:spectral-geodesic-collar-neighborhood} by \cite[Lemma~6.9]{cecchini2022scalar} together with the similar arguments and estimates of spinors of Callias operators in the proof of Theorem \ref{thm:main}.

\section{Preliminaries: Callias Operator Approach}\label{sec:preparations}
Before proving our results, we will introduce the requisite machinery and notation.
Let $M$ be a compact even-dimensional Riemannian spin manifold with boundary. Let $\mathcal{S}_{M}\to M$ denote the associated complex spinor bundle, equipped with the natural spin connection induced by the Levi-Civita connection. Let $\mathcal{E},\mathcal{F}\to M$ be a pair of Hermitian bundles with metric connections, and consider the $\mathbb{Z}/2$-graded Dirac bundle 
\begin{equation}
    S=\mathcal{S}_{M}\widehat{\otimes} \footnote{Here the symbol $\widehat{\otimes}$ denotes the $\mathbb{Z}/2$-graded tensor product, cf. \cite{quillen1985superconnections}. If a vector bundle is ungraded, we understand it as $\mathbb{Z}/2$-graded by taking its odd part as zero.} (\mathcal{E}\oplus \mathcal{F}^{\rm op}),
\end{equation}
where the grading $S=S^+\oplus S^-$ is given by
\[
    S^+:=(\mathcal{S}_{M}^+\otimes \mathcal{E})\oplus (\mathcal{S}_{M}^- \otimes \mathcal{F})\quad\text{and}\quad S^-:=(\mathcal{S}_{M}^+\otimes \mathcal{F})\oplus (\mathcal{S}_{M}^- \otimes \mathcal{E}).
\]
This bundle carries a natural involution $\sigma$ given by
\begin{equation}
\sigma:={\rm id}_{\mathcal{S}_{M}}\widehat{\otimes}\begin{pmatrix}
0& \daleth^*\\
\daleth & 0
\end{pmatrix}
:\left. S\right|_{M\setminus K}\to \left. S\right|_{M\setminus K},
\end{equation}
where $K$ is a compact subset of the interior $M^{\circ}$, ${\rm id}_{\mathcal{S}_M}$ means the identity on $\mathcal{S}_{M}$ and $\daleth:\left.\mathcal{E}\right|_{M\setminus K}\to \left.\mathcal{F}\right|_{M\setminus K}$ denotes a parallel unitary bundle isomorphism which extends to a smooth bundle map on a neighborhood of $\overline{M\setminus K}$. Such bundles $\mathcal{E},\mathcal{F}$ form a \textit{Gromov-Lawson pair} with \textit{support} $K$, cf. \cite[Example~2.5]{cecchini2022scalar}.

Let $\mathcal{D}_{\mathcal{E}}$ and $\mathcal{D}_{\mathcal{F}}$ be the twisted Dirac operators on $M$ respectively with the bundles $\mathcal{E}$ and $\mathcal{F}$, then we may form the corresponding Dirac operator $\mathcal{D}$ on $S$, see \cite[Example~2.5]{cecchini2022scalar}. Given a Lipschitz function $f$ on $M$, the associated \textit{Callias operator} is defined by
\begin{equation}\label{eq:defn-of-Callias-operator}
\mathcal{B}_{f}u= \mathcal{D}u+f\sigma u.
\end{equation} 
By the $\mathbb{Z}/2$-graded structure on $S$, the differential operator $\mathcal{B}_f$ can be decomposed as $\mathcal{B}_f=\mathcal{B}_f^+\oplus\mathcal{B}_f^-$ where $\mathcal{B}_f^{\pm}: C^\infty(M,S^{\pm})\to C^\infty(M,S^{\mp})$, cf. \cite[Section~3]{cecchini2022scalar}.

We now consider an elliptic boundary value problem 
\begin{equation}\label{eq:boundary-value-problem}
\mathcal{B}_fu=0\quad \text{in}\ M,\qquad s\nu\cdot\sigma u=u\quad \text{on}\ \partial M,
\end{equation}
where $s:\partial M\to \{\pm 1\}$ denotes the choice of signs and $\nu\in C^\infty(\partial M,\left. TM\right|_{\partial M})$ denotes the outward pointing unit normal vector field. Let $\mathcal{B}_{f,s}$ denote the operator $\mathcal{B}_f$ on the domain
\begin{equation}\label{eq:domain}
    C_{\sigma,s}^\infty(M,S):=\left\{u\in C_0^\infty(M,S) \Big| \,  s\nu\cdot\sigma\left.u\right|_{\partial M}=\left.u\right|_{\partial M}\right\}.
\end{equation}
Thanks to \cite[Theorem~3.4]{cecchini2022scalar}, $\mathcal{B}_{f,s}$ is self-adjoint and Fredholm.
Since $s\nu\cdot\sigma$ is even with respect to the grading on $S$, the operator $\mathcal{B}_{f,s}$ can also be decomposed as $\mathcal{B}_{f,s}=\mathcal{B}_{f,s}^+\oplus\mathcal{B}_{f,s}^-$ where $\mathcal{B}_{f,s}^{\pm}$ are adjoint to each other. Therefore, 
\begin{equation}\label{eq:defn-of-index}
{\rm ind}(\mathcal{B}_{f,s})=\dim(\ker(\mathcal{B}_{f,s}^{+}))-\dim(\ker(\mathcal{B}_{f,s}^{-})).
\end{equation}

Given a Gromov-Lawson pair $(\mathcal{E},\mathcal{F})$ on $M$, let ${\rm d}M:=M\cup_{\partial M}M^-$ be the double of $M$, where $M^-$ denotes the manifold $M$ with opposite orientation. Note that ${\rm d}M$ is a closed manifold carrying a spin structure induced by the spin structure of $M$. Let $V(\mathcal{E},\mathcal{F})\to {\rm d}M$ be a bundle on ${\rm d}M$ which outside a small collar neighborhood coincides with $\mathcal{E}$ over $M$ and with $\mathcal{F}$ over $M^{-}$ by the bundle isomorphism $\daleth$ in a Gromov-Lawson pair. Now, we define the \textit{relative index} of the pair $(\mathcal{E},\mathcal{F})$
\begin{equation}
{\rm indrel}(M;\mathcal{E},\mathcal{F}):={\rm ind}(\mathcal{D}_{{\rm d}M,V(\mathcal{E},\mathcal{F})}).
\end{equation}
From \cite[Corollary~3.9]{cecchini2022scalar}, we have
\begin{equation}\label{eq:GGL}
{\rm ind}(\mathcal{B}_{f,-1})={\rm indrel}(M;\mathcal{E},\mathcal{F}).
\end{equation}

A smooth map $\Phi: M\to N$ between two oriented manifolds $M$ and $N$ is a \textit{spin map}, cf. \cite[Section~1]{goette2002scalar}, if it is compatible with the second Stiefel-Whitney classes, i.e., $w_2(TM)=\Phi^* w_2(TN)$. In particular, this condition implies that the bundle $TM\oplus \Phi^*TN$ admits a spin structure.
Now, if $M$ is a compact even-dimensional Riemannian manifold with boundary and $N$ is a compact Riemannian manifold, and $\Phi,\Psi$ are two spin maps from $M$ to $N$, then we think of $N$ as the background space and $M$ as the manifold whose relative Gromov-Lawson pair we want to define, relative to $N$. We denote by $\mathcal{S}_M$ and $\mathcal{S}_N$ the locally defined spinor bundles of $M$ and $N$ respectively. We use $\Phi^*\mathcal{S}_{N}, \Psi^*\mathcal{S}_N$ to denote the pair of Hermitian bundles over $M$, which are the pull-back bundles of the complex spinor bundle $\mathcal{S}_N$ over $N$ via the maps $\Phi,\Psi$ respectively. Note that $\Phi^*\mathcal{S}_{N}$ and $\Psi^*\mathcal{S}_N$ carry the metric connections induced by the spin connection on $\mathcal{S}_N$ using the pull-back maps respectively. Note that $\mathcal{S}_M \widehat{\otimes} \Phi^*\mathcal{S}_N$ and $\mathcal{S}_M \widehat{\otimes} \Psi^*\mathcal{S}_N$ are globally defined although $\mathcal{S}_M$ and $\mathcal{S}_N$ are defined locally. In this case, set
\begin{equation}
S=\mathcal{S}_M\widehat{\otimes}\big(\Phi^*\mathcal{S}_N\oplus (\Psi^*\mathcal{S}_N)^{\rm op} \big). 
\end{equation}
We can retain the above construction of the Gromov-Lawson pair by restricting to the bundles $\Phi^*\mathcal{S}_N$ and $\Psi^*\mathcal{S}_N$. Such a pair of bundles $\Phi^*\mathcal{S}_M,\Psi^*\mathcal{S}_N$ is called a \textit{relative Gromov-Lawson pair} with \textit{support} $K$.

Let $\mathcal{D}_{\Phi^*\mathcal{S}_N}$ and $\mathcal{D}_{\Psi^*\mathcal{S}_N}$ be the twisted Dirac operators over $M$ respectively with the bundles $\Phi^*\mathcal{S}_N$ and $\Psi^*\mathcal{S}_N$. We form the corresponding Dirac operator $\mathcal{D}$ on $S$, compare \cite[Example~2.5]{cecchini2022scalar}. Given a Lipschitz function $f$ on $M$, the \textit{Callias operator} $\mathcal{B}_f$ on $S$ is defined as in \eqref{eq:defn-of-Callias-operator}. By the $\mathbb{Z}/2$-graded structure on $S$, the operator $\mathcal{B}_f$ can be decomposed as $\mathcal{B}_f=\mathcal{B}_f^+\oplus\mathcal{B}_f^-$ where $\mathcal{B}_f^{\pm}: C^\infty(M,S^{\pm})\to C^\infty(M,S^{\mp})$. We also consider an elliptic boundary value problem as in \eqref{eq:boundary-value-problem}. Let $\mathcal{B}_{f,s}$ denote the operator $\mathcal{B}_f$ on the domain $C_{\sigma,s}^\infty(M,S)$ as in \eqref{eq:domain}. As in \cite[Theorem~3.4]{cecchini2022scalar}, $\mathcal{B}_{f,s}$ is self-adjoint and Fredholm.
Since $s\nu\cdot\sigma$ is even with respect to the grading on $S$, the operator $\mathcal{B}_{f,s}$ can also be decomposed as $\mathcal{B}_{f,s}=\mathcal{B}_{f,s}^+\oplus\mathcal{B}_{f,s}^-$ where $\mathcal{B}_{f,s}^{\pm}$ are adjoint to each other. Therefore, the index of $\mathcal{B}_{f,s}$ is defined as in \eqref{eq:defn-of-index}.

Given a relative Gromov-Lawson pair $(\Phi^*\mathcal{S}_N,\Psi^*\mathcal{S}_N)$  over $M$ associated to the spin maps $\Phi,\Psi:M\to N$, note that the bundle $\mathcal{S}_M \widehat{\otimes} V(\Phi^*\mathcal{S}_N,\Psi^*\mathcal{S}_N)$ over ${\rm d}M$ which outside a small collar neighborhood coincides with $\mathcal{S}_M \widehat{\otimes} \Phi^*\mathcal{S}_N$ over $M$ and with $\mathcal{S}_M \widehat{\otimes} \Psi^*\mathcal{S}_N$ over $M^{-}$ defined using the tensor product ${\rm id}_{\mathcal{S}_M}\widehat{\otimes} \daleth$ of the identity on $\mathcal{S}_M$, where $\daleth$ is the bundle isomorphism in the relative Gromov-Lawson pair. In this case, since the bundles $\mathcal{S}_M \widehat{\otimes} \Phi^*\mathcal{S}_N,\mathcal{S}_M \widehat{\otimes} \Psi^*\mathcal{S}_N$ are globally defined over $M$ and $M^-$ respectively, the bundle $\mathcal{S}_M \widehat{\otimes} V(\Phi^*\mathcal{S}_N,\Psi^*\mathcal{S}_N)$ is also globally defined on ${\rm d}M$. Then we define the \textit{relative index} of the relative Gromov-Lawson pair $(\Phi^*\mathcal{S}_N,\Psi^*\mathcal{S}_N)$
\begin{equation}
{\rm indrel}(M;\Phi^*\mathcal{S}_N,\Psi^*\mathcal{S}_N):={\rm ind}(\mathcal{D}_{{\rm d}M,V(\Phi^*\mathcal{S}_N,\Psi^*\mathcal{S}_N)}).
\end{equation}
\begin{lemma}\label{lem:index-coincide-relative-index}
For any choice of the potential $f$, we have
\begin{equation}\label{eq:index-coincide-relative-index}
{\rm ind}(\mathcal{B}_{f,-1})={\rm indrel}(M;\Phi^*\mathcal{S}_N,\Psi^*\mathcal{S}_N).
\end{equation}
\end{lemma}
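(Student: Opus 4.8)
The plan is to observe that this is the exact analogue, in the ``relative'' setting, of the identity \eqref{eq:GGL} taken from \cite[Corollary~3.9]{cecchini2022scalar}, and to reduce to that result. The key point is that the whole Callias-operator package used in \cite{cecchini2022scalar}---the $\mathbb{Z}/2$-graded Dirac bundle $S=\mathcal{S}_M\widehat{\otimes}(\mathcal{E}\oplus\mathcal{F}^{\mathrm{op}})$, the involution $\sigma$ built from a parallel unitary isomorphism $\daleth\colon\mathcal{E}\to\mathcal{F}$ away from a support set $K$, the self-adjoint Fredholm boundary realization $\mathcal{B}_{f,s}$, and the comparison with the Dirac operator on the double twisted by $V(\mathcal{E},\mathcal{F})$---only ever uses that $(\mathcal{E},\mathcal{F})$ is a Gromov-Lawson pair of Hermitian bundles with metric connection. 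In the present situation we take $\mathcal{E}=\Phi^*\mathcal{S}_N$ and $\mathcal{F}=\Psi^*\mathcal{S}_N$: these are genuine Hermitian bundles with metric connections on $M$, and on $M\setminus K$ (where $\Phi$ and $\Psi$ will be arranged to agree, in the applications) the identification of their pullbacks supplies the required parallel unitary isomorphism $\daleth$. Hence $(\Phi^*\mathcal{S}_N,\Psi^*\mathcal{S}_N)$ is literally a Gromov-Lawson pair in the sense recalled above, the only subtlety being that $\mathcal{S}_M$ itself is merely locally defined.

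First I would verify that the additional tensor factor $\mathcal{S}_M$ causes no trouble. The bundles $\mathcal{S}_M\widehat{\otimes}\Phi^*\mathcal{S}_N$ and $\mathcal{S}_M\widehat{\otimes}\Psi^*\mathcal{S}_N$ are globally defined on $M$ (they are the spinor bundles of the spin bundles $TM\oplus\Phi^*TN$, $TM\oplus\Psi^*TN$, using that $\Phi,\Psi$ are spin maps), and on the double $\mathrm{d}M$ the glued bundle $\mathcal{S}_M\widehat{\otimes}V(\Phi^*\mathcal{S}_N,\Psi^*\mathcal{S}_N)$ is globally defined because the clutching map is ${\rm id}_{\mathcal{S}_M}\widehat{\otimes}\daleth$, which is parallel on the collar. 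Thus every object in \cite[Section~3]{cecchini2022scalar} and in \cite[Corollary~3.9]{cecchini2022scalar}---the graded bundle $S$, the operator $\mathcal{D}$, the Callias operator $\mathcal{B}_f=\mathcal{D}+f\sigma$, its graded boundary realization $\mathcal{B}_{f,s}$, its self-adjointness and Fredholmness via \cite[Theorem~3.4]{cecchini2022scalar}, and the Dirac operator $\mathcal{D}_{\mathrm{d}M,V}$---is well defined exactly as in the absolute case. I would then simply invoke \cite[Corollary~3.9]{cecchini2022scalar}, which applies verbatim to the Gromov-Lawson pair $(\Phi^*\mathcal{S}_N,\Psi^*\mathcal{S}_N)$, to get ${\rm ind}(\mathcal{B}_{f,-1})={\rm indrel}(M;\Phi^*\mathcal{S}_N,\Psi^*\mathcal{S}_N)$, and note that by \cite[Theorem~3.4]{cecchini2022scalar} (homotopy invariance of the index of a family of self-adjoint Fredholm operators through admissible potentials) the left-hand side is independent of the choice of $f$, giving the asserted equality ``for any choice of the potential $f$''.

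The one place where genuine care is needed---and what I expect to be the main obstacle---is checking that the arguments of \cite[Section~3, Corollary~3.9]{cecchini2022scalar} are insensitive to whether the ambient Dirac bundle is $\mathcal{S}_M\widehat{\otimes}(\mathcal{E}\oplus\mathcal{F}^{\mathrm{op}})$ with $\mathcal{E},\mathcal{F}$ abstract, or the specific $\mathcal{S}_M\widehat{\otimes}(\Phi^*\mathcal{S}_N\oplus(\Psi^*\mathcal{S}_N)^{\mathrm{op}})$. Concretely one must confirm: (a) the relative index on the double is computed purely via the clutching isomorphism $\daleth$ and does not see any particular trivialization of $\mathcal{S}_M$; (b) the deformation/excision argument identifying ${\rm ind}(\mathcal{B}_{f,-1})$ with ${\rm ind}(\mathcal{D}_{\mathrm{d}M,V})$ (stretching the collar, pushing the potential to $\pm\infty$ off $K$) only uses that $\daleth$ is parallel near $\partial M$, which holds here by construction of the relative Gromov-Lawson pair; and (c) the local index density away from $K$ vanishes because $\sigma$ conjugates the Dirac operator to minus itself there, a purely pointwise statement about $\daleth$. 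None of these uses global triviality of $\mathcal{S}_M$, only the global existence of the \emph{twisted} bundles, which we have already recorded. With these checks in place the proof is a direct citation of \cite[Corollary~3.9]{cecchini2022scalar} applied in the relative setting.
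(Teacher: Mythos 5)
Your proposal is correct and follows essentially the same route as the paper: both reduce the relative statement to the machinery of \cite[Section~3]{cecchini2022scalar}, using that the twisted bundles $\mathcal{S}_M\widehat{\otimes}\Phi^*\mathcal{S}_N$, $\mathcal{S}_M\widehat{\otimes}\Psi^*\mathcal{S}_N$ and the glued bundle on ${\rm d}M$ are globally defined, so the Callias package applies without change. The only difference is presentational: the paper re-runs the proof of \cite[Corollary~3.9]{cecchini2022scalar} explicitly (extending $\Psi^*\mathcal{S}_N$ over the double, cutting along $\partial M$ via \cite[Theorem~3.6]{cecchini2022scalar}, and using \cite[Lemmas~3.5 and 3.8]{cecchini2022scalar} -- the latter being the correct reference for independence of the potential, rather than Theorem~3.4), whereas you cite the corollary as a black box after the same verification.
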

\begin{remark}
In \cite[Corollary~3.9]{cecchini2022scalar}, Cecchini and Zeidler have dealt explicitly with the case of the Gromov-Lawson pair. The methods and results in \cite[Corollary~3.9]{cecchini2022scalar} holds for the relative Gromov-Lawson pair without change. 
\end{remark}
For the sake of completeness, we include a proof here.
\begin{proof}[Proof of Lemma \ref{lem:index-coincide-relative-index}]
Let $\mathcal{E}'=V(\Phi^*\mathcal{S}_N,\Psi^*\mathcal{S}_N)$. We extend the bundle $\Psi^*\mathcal{S}_N$ to a bundle $\mathcal{F}'$ on ${\rm d}M$ with metric connection such that $\left.\mathcal{F}'\right|_{M^{-}}=\Psi^*\mathcal{S}_N$. Consider the $\mathbb{Z}/2$-graded Dirac bundle $W':=\mathcal{S}_{{\rm d}M}\widehat{\otimes}(\mathcal{E}'\oplus (\mathcal{F}')^{\rm op})$ with associated Dirac operator $\mathcal{D}'$. Note that
\begin{equation}\label{eq:index-dirac-on-double}
{\rm ind}(\mathcal{D}')={\rm indrel}(M;\Phi^*\mathcal{S}_N,\Psi^*\mathcal{S}_N)
\end{equation}
because the index of the Dirac operator on $\mathcal{S}_{{\rm d}M} \widehat{\otimes} \mathcal{F}'$ vanishes. Observe also that $\mathcal{D}'=\mathcal{B}_0^{\mathcal{E}',\mathcal{F}'}$, where $\mathcal{B}_0^{\mathcal{E}',\mathcal{F}'}$ is the Callias operator associated to the pair $(\mathcal{E}',\mathcal{F}')$. Cut ${\rm d}M$ open along $\partial M$ as in \cite[Theorem~3.6]{cecchini2022scalar}. By pulling back all data, we obtain the operator $\mathcal{B}_{0,-1}^{\Phi^*\mathcal{S}_N,\Psi^*\mathcal{S}_N}$ on $M$ and $\mathcal{B}_{0,-1}^{\Psi^*\mathcal{S}_N,\Psi^*\mathcal{S}_N}$ on $M^{-}$. By \cite[Lemma~3.8]{cecchini2022scalar}, ${\rm ind}(\mathcal{B}_{0,-1}^{\Psi^*\mathcal{S}_N,\Psi^*\mathcal{S}_N})=0$. By \cite[Lemma~3.5]{cecchini2022scalar}, ${\rm ind}(\mathcal{B}_{f,-1}^{\Phi^*\mathcal{S}_N,\Psi^*\mathcal{S}_N})={\rm ind}(\mathcal{B}_{0,-1}^{\Phi^*\mathcal{S}_N,\Psi^*\mathcal{S}_N})$. Therefore, the lemma follows from \eqref{eq:index-dirac-on-double} and \cite[Theorem~3.6]{cecchini2022scalar}. 
\end{proof}

\section{Proofs of Main Theorems}\label{sec:proof-of-theorems}
The purpose of this section is to establish our main results: Theorem \ref{thm:generalized-pointwise-long-neck-principle}, Theorem \ref{thm:main}, and Theorem \ref{thm:spectral-geodesic-collar-neighborhood} using the technical preparations from Section \ref{sec:preparations}.

\subsection{Proof of Theorem \ref{thm:generalized-pointwise-long-neck-principle}}\label{subsec:generalized-pointwise-long-neck}
Let us state the following constructive lemma of a relative Gromov-Lawson pair, which plays an important role in the proof of Theorem \ref{thm:generalized-pointwise-long-neck-principle}.
\begin{lemma}\label{lem:relative-GL-pair}
Let $(N,g_N)$ be a compact $n$-dimensional Riemannian manifold with non-negative curvature operator $\mathcal{R}_N\geq 0$ on $\Lambda^2TN$ and non-vanishing Euler characteristic, where $n\geq 2$ even. Let $(M,g_M)$ be a compact $m$-dimensional Riemannian spin manifold with non-empty boundary, where $m=n+4k$ for $k\in\mathbb{Z}_{\geq 0}$. Let $\Phi: M\to N$ be a spin map which is locally constant near the boundary $\partial M$ and of non-zero $\widehat{A}$-degree. If the length of the neck ${\rm dist}_{g_M}({\rm supp}({\rm d}\Phi),\partial M)\geq \ell$ for some number $\ell$, then there exists a relative Gromov-Lawson pair $(\mathcal{\mathcal{E}},\mathcal{F})$ such that 
\begin{itemize}
\item[\rm (i)] $\mathscr{R}_p^{\mathcal{E}\oplus \mathcal{F}}\geq -({\rm scal}_{N}\circ \Phi)\cdot \|\Lambda^2{\rm d}\Phi\|(p)/4$ for all $p\in M$;
\item[\rm (ii)] $(\mathcal{E},\mathcal{F})$ has support $K:=\{p\in M|{\rm dist}_{g_M}(p,\partial M) \geq \ell\}\supseteq {\rm supp}({\rm d}\Phi)$;
\item[\rm (iii)] ${\rm indrel}(M;\mathcal{E},\mathcal{F})\neq 0$.
\end{itemize}
\end{lemma}
\begin{proof}
We follow the approach of \cite[Lemma~5.1]{cecchini2022scalar}. Namely, we construct a spin map $\Psi: M\to N$. Note that $\Psi$ is a spin map because $\Psi$ coincides with $\Phi$ near $\partial M$, $\Psi$ is constant on the other part and $M$ is spin.
Then we pass to the relative Gromov-Lawson pair construction associated to the spin maps $\Phi, \Psi$. The main difference in our case is that we obtain the estimate of curvature term $\mathscr{R}^{\Phi^*\mathcal{S}_N\oplus \Psi^*\mathcal{S}_N}$ and the obstruction to the relative index of the pair $(\Phi^*\mathcal{S}_N, \Psi^*\mathcal{S}_N)$ via Goette-Semmelmann's argument \cite[Equation~1.11, Theorem~2.4]{goette2002scalar}. Details are left to the reader. 

\end{proof}
\begin{proof}[Proof of Theorem \ref{thm:generalized-pointwise-long-neck-principle}]
Argue by contradiction together with Lemma \ref{lem:relative-GL-pair} and then revise the proof of \cite[Theorem~1.4]{cecchini2022scalar} verbatim in our setting. Details are left to the reader.  
\end{proof}

\subsection{Proof of Theorem \ref{thm:main}}\label{subsec:Proof-Main-Theorem-1}
% \begin{lemma}\label{lem:key}
% Let $(N,g_N)$ be a compact $n$-dimensional Riemannian manifold with non-negative curvature operator $\mathcal{R}_N\geq 0$ on $\Lambda^2TN$ and non-vanishing Euler characteristic, where $n\geq 2$ even. Let $(M,g_M)$ be a compact $m$-dimensional Riemannian manifold with non-empty boundary, where $m=n+4k$ for $k\in\mathbb{Z}_{\geq 0}$. Let $\Phi: M\to N$ be a spin map which is locally constant near the boundary $\partial M$ and of non-zero $\widehat{A}$-degree. If there exists $\varepsilon>0$ such that ${\rm dist}_{g_M}({\rm supp}({\rm d}\Phi),\partial M) \geq \omega:=\pi\sqrt{\frac{c}{\Lambda_c}\left(\frac{(4c-1)m+2-4c}{(4c-1)m+1}\right)}+\varepsilon$, then there exists a relative Gromov-Lawson pair $(\mathcal{\mathcal{E}},\mathcal{F})$ such that 
% \begin{itemize}
% \item[(i)] $\mathscr{R}_p^{\mathcal{E}\oplus \mathcal{F}}\geq -({\rm scal}_{N}\circ \Phi)\cdot \|\Lambda^2{\rm d}\Phi\|(p)/4$ for all $p\in M$;
% \item[(ii)] $(\mathcal{E},\mathcal{F})$ has support $K:=\{p\in M|{\rm dist}_{g_M}(p,\partial M) \geq \omega\}\supseteq {\rm supp}({\rm d}\Phi)$;
% \item[(iii)] ${\rm indrel}(M;\mathcal{E},\mathcal{F})\neq 0$.
% \end{itemize}
% \end{lemma}
% The proof of this lemma is omitted here since it is essentially the same as in Lemma \ref{lem:relative-GL-pair}.
In this subsection, we prove our spectral long neck principle, Theorem \ref{thm:main}.
\begin{proof}[Proof of Theorem \ref{thm:main}]
Let $f$ be a Lipschitz function on $M$ such that $f$ is non-negative on all $M$ and tends to infinity on $\partial M$, to be constructed later. Let $(\mathcal{E},\mathcal{F})$ be a relative Gromov-Lawson pair, to be constructed later as well. We denote by
${\rm II}_{\partial M}$ the second fundamental form of $\partial M$ with respect to $\nu$, and by $H=\frac{1}{m-1}{\rm tr\, II}_{\partial M}$ the mean curvature of $\partial M$. Set $\beta=\frac{m}{m-1}-\frac{1}{4c}>0$ and $\beta_1=\frac{1}{m(m-1)}-\frac{1}{\beta(m-1)^2}$. If we have a non-trivial element $u\in\ker(\mathcal{B}_{f,-1})$, according to \cite[Equation~4.4]{cecchini2022scalar}, we have
\begin{equation}
\begin{aligned}
0=&\int_{M} \frac{m}{m-1}\left(|\mathcal{P}u|^2+\langle u,\frac{{\rm scal}_M}{4}u+\mathscr{R}^{\mathcal{E}\oplus\mathcal{F}}u\rangle\right) {\rm vol}_{M} \\
&\qquad \qquad + \int_{M} \langle u, f^2u+\nabla f\cdot \sigma u\rangle {\rm vol}_{M}+\int_{\partial M}(f+\frac{m}{2}H)|u|^2 {\rm vol}_{\partial M},
\end{aligned}
\end{equation}
where $\mathcal{P}$ is the Penrose operator, and $\mathscr{R}^{\mathcal{E}\oplus\mathcal{F}}$ is the curvature term acting on sections of $S$ in the Bochner-Lichnerowicz-Weitzenb\"{o}ck formula, compare \cite[Equation~2.18]{cecchini2022scalar}.

Using \cite[Equation~3.10]{hirsch2023spectral}, we have
\begin{equation}\label{eq:consequence-of-BLW}
\begin{aligned}
-\int_{\partial M}(f+\frac{m}{2}H) & |u|^2 {\rm vol}_{\partial M} \geq \int_{M}\left[\frac{m}{m-1}\left(\frac{1}{4c}\big|\nabla|u|\big|^2+\beta_1f^2|u|^2\right.\right.\\
&\left.\left. +\langle u,\frac{{\rm scal}_M}{4}u+\mathscr{R}^{\mathcal{E}\oplus\mathcal{F}}u\rangle \right)+\langle u, f^2u+\nabla f\cdot\sigma u\rangle\right] {\rm vol}_{M},
\end{aligned}
\end{equation}
Multiplying the following relation (see e.g. \cite[Equation~3.12]{hirsch2023spectral}) by $\frac{m}{m-1}\beta_1$ 
\begin{equation}\label{eq:boundary-identity}
\begin{aligned}
\int_{\partial M}\langle \nu\cdot u, f\sigma u\rangle {\rm vol}_{\partial M} 
&=-\int_{M}\left(2f^2|u|^2+\langle u,\nabla f\cdot \sigma u\rangle\right) {\rm vol}_{M},
\end{aligned}
\end{equation}
and then summing the result with \eqref{eq:consequence-of-BLW}, we have
\begin{equation}\label{eq:3.13}
\begin{aligned}
&-\int_{\partial M}\left[\left(1-\frac{m\beta_1}{m-1}\right)f+\frac{m}{2}H\right]|u|^2 {\rm vol}_{\partial M}\\
\geq & \int_{M}\frac{m}{m-1}\left(\frac{1}{4c}\big|\nabla|u|\big|^2+\langle u, \frac{{\rm scal}_M}{4}u+\mathscr{R}^{\mathcal{E}\oplus\mathcal{F}}u\rangle\right) {\rm vol}_{M}\\
&\qquad +\int_{M}\left\langle u, \underbrace{\left(1-\frac{m\beta_1}{m-1}\right)}_{\beta_2}f^2u+\left(1-\frac{m\beta_1}{m-1}\right) \nabla f\cdot\sigma u\right\rangle {\rm vol}_{M}.
\end{aligned}
\end{equation}
Clearly, $\beta_1<\frac{1}{m(m-1)}$, thus $\beta_2=1-\frac{m\beta_1}{m-1}>1-\frac{1}{(m-1)^2}>0$. Hence the boundary term of \eqref{eq:3.13} is non-positive, provided that $f$ is sufficiently large on $\partial M$.

Assume, by contradiction, that ${\rm deg}_{\widehat{A}}(\Phi)\neq 0$. Moreover, restating the hypotheses of the theorem, we have 
\begin{itemize}
\item[(iv)] 
 ${\rm scal}_{M}\geq ({\rm scal}_N\circ \Phi)\cdot \|\Lambda^2{\rm d}\Phi\|$ on ${\rm supp}({\rm d}\Phi)$; 
\item[(v)] there exists an $\varepsilon>0$ such that 
\[
{\rm dist}_{g_M}({\rm supp}({\rm d}\Phi),\partial M) \geq \omega:=\pi\sqrt{\frac{\beta_2c(m-1)}{m\Lambda_c}}+\varepsilon=\pi\sqrt{\frac{c}{\Lambda_c}\left(\frac{(4c-1)m+2-4c}{(4c-1)m+1}\right)}+\varepsilon.
\]
\end{itemize}
Then we choose a relative Gromov-Lawson pair $(\mathcal{E},\mathcal{F})$ satisfying the conditions (i) to (iii) from Lemma \ref{lem:relative-GL-pair}.

Next, we define a sequence of bounded Lipschitz functions $f_j$ on $M$, which satisfy a certain differential inequality and have the property that $f_j\to +\infty$ on $\partial M$ as $j\to +\infty$, in the following way as in \cite[Section~3]{hirsch2023spectral}. Let $r(p)={\rm dist}_{g_M}(p,\partial M)$, and for each $j$ let 
\begin{equation}\label{eq:defn-potential}
\begin{aligned}
f_j(p)=\begin{cases}
\frac{\pi}{2\omega}\cot\left(\frac{\pi}{2\omega}r(p)+\frac{1}{j}\right)\quad &\text{if}\ r(p)\leq \frac{2\omega}{\pi}(\frac{\pi}{2}-\frac{1}{j})\\
0\quad &\text{otherwise}.
\end{cases}
\end{aligned}
\end{equation}
Since ${\rm dist}_{g_M}({\rm supp}({\rm d}\Phi),\partial M)\geq \omega$, we have that $\left.f_j\right|_{{\rm supp}({\rm d}\Phi)}=0$. Fix a compact subset $\Omega\subset M^{\circ}$ such that for all sufficiently large $j$ we have
\begin{subequations}
\begin{align}
\frac{3}{2}f_j^2-|\nabla f_j|\geq 1\quad &\text{on}\ M\setminus\Omega,\label{eq:characteristic-1}\\
\beta_2f_j^2-\beta_2|\nabla f_j|+\frac{m\Lambda_c}{4c(m-1)}\geq C_{\varepsilon}\quad &\text{on}\ M, \label{eq:characteristic-2}
\end{align}
\end{subequations}
where $C_{\varepsilon}>0$ depends on $\varepsilon, m, c$ and $\Lambda_c$. Equation \eqref{eq:index-coincide-relative-index} together with (iii) in Lemma \ref{lem:relative-GL-pair} implies that the corresponding Callias operator subject to the sign $s=-1$ satisfies
\begin{equation}\label{eq:non-vanishing-of-index}
{\rm ind}(\mathcal{B}_{f_j,-1})={\rm indrel}(M;\mathcal{E},\mathcal{F})\neq 0.
\end{equation}
In particular, we may obtain a non-trivial element $u_j\in\ker(\mathcal{B}_{f_j,-1})$ for each $j$. 
Similarly to \cite{hirsch2023spectral}, from equation \eqref{eq:characteristic-1}, \eqref{eq:boundary-identity} together with Cauchy-Schwarz inequality, the boundary condition of \eqref{eq:boundary-value-problem} and the property of $\left. f_j\right|_{\partial M}$, we have
\begin{equation}\label{eq:3.18}
\begin{aligned}
\int_{M\setminus\Omega}\left(\frac{1}{2}f_j^2+1\right)|u_j|^2 {\rm vol}_{M} & \leq \int_{M\setminus \Omega}(2f_j^2-|\nabla f_j|)|u_j|^2 {\rm vol}_{M} \\
& \leq\int_{\Omega}(|\nabla f_j|-2f_j^2)|u_j|^2 {\rm vol}_{M}.
\end{aligned}
\end{equation}
Note that $\max_{\Omega}|u_j|\neq 0$, otherwise this estimate implies that $u_j$ vanishes globally. Therefore we may assume that $\max_{\Omega}|u_j|=1$ by appropriate rescaling. From \eqref{eq:characteristic-2} and \eqref{eq:3.18},
\begin{equation}\label{eq:bound-of-spinor}
\int_{\Omega}|u_j|^2 {\rm vol}_{M}+ \int_{M\setminus\Omega}\left(\frac{1}{2}f_j^2+1\right)|u_j|^2 {\rm vol}_{M} \leq \left(\frac{m\Lambda_c}{4c(m-1)\beta_2}+1\right)|\Omega|.
\end{equation}
Let 
\[
  \Upsilon_j=\min\limits_{\partial M}\left(\beta_2f_j-\frac{m}{2}|H|\right),  
\]
then $\Upsilon_j\to +\infty$ as $j\to +\infty$. Applying \eqref{eq:3.13}, \eqref{eq:characteristic-2} and (i) in Lemma \ref{lem:relative-GL-pair}, we have 
\begin{equation}\label{eq:bound-of-first-derivative}
\begin{aligned}
&\frac{m}{4c(m-1)}\int_{M}\big|\nabla|u_j|\big|^2 {\rm vol}_{M} +\int_{\partial M}\Upsilon_j|u_j|^2 {\rm vol}_{\partial M}  \\
\leq &\int_{M}\langle u_j,(-\beta_2f_j^2-\beta_2\nabla f_j\cdot \sigma)u_j\rangle {\rm vol}_{M}\\
&\qquad -\frac{m}{m-1}\int_{M}\left\langle u_j, \frac{{\rm scal}_M}{4}u_j+\mathscr{R}^{\mathcal{E}\oplus\mathcal{F}}u_j\right\rangle {\rm vol}_{M}
\leq   C_1
\end{aligned}
\end{equation}
for some constant $C_1$ independent of $j$. 

Arguing as in \cite[Section~3]{hirsch2023spectral}, from \eqref{eq:bound-of-spinor} and \eqref{eq:bound-of-first-derivative}, the sequence $|u_j|$ is uniformly bounded in $H^1(M)$. Thus $|u_j|$ weakly subconverges to a function $|u|$ in $H^1(M)$ with strong convergence in $H^s(M)$ for any $s\in [\frac{1}{2},1)$.
% see \cite[Theorem~9.22]{folland1999real} or \cite[Corollary~7.2]{nezza2012hitchhiker}.
Since the trace map $\tau: H^s(M)\to H^{s-\frac{1}{2}}(\partial M)$ is continuous,
% \cite[Proposition~3.8]{nezza2012hitchhiker},
$|u_j|$ converges subsequentially to $\tau(|u|)$ in $L^2(\partial M)$. However, since $\Upsilon_j\to +\infty$ we find that \eqref{eq:bound-of-first-derivative} yields $\tau(|u|)=0$ on $\partial M$, and hence $|u|\in H_0^1(M^{\circ})$. 

Let $U=\left\{p\in M\big|{\rm d}_p\Phi\neq 0\right\}$ and $U_{\delta}=\left\{p\in U\big| \, \|\Lambda^2{\rm d}\Phi\|(p)<\delta\right\}$ for suitable $0<\delta<1$. Then taking the limit in \eqref{eq:3.13} while applying weak lower semi-continuity of the $H^1$-norm, the definition of the $c$-spectral constant, $\mathscr{R}^{\mathcal{E}\oplus\mathcal{F}}=0$ on $M\setminus U$ together with \eqref{eq:characteristic-2}, we have 
\begin{equation}\label{eq:estimate-1}
\begin{aligned}
0\geq &-\liminf_{j\to +\infty}\int_{\partial M}\left(\beta_2f_j+\frac{m}{2}H\right)\tau(|u_j|)^2 {\rm vol}_{\partial M}\\
\geq &\liminf_{j\to +\infty}\int_{M}\left(\frac{m}{4c(m-1)}(\big|\nabla|u_j|\big|^2+c\ {\rm scal}_M|u_j|^2)\right.\\
&\qquad +\left. \left(\beta_2f_j^2-\beta_2|\nabla f_j|\right)|u_j|^2+\langle u_j, \frac{m}{m-1}\mathscr{R}^{\mathcal{E}\oplus\mathcal{F}}u_j\rangle\right) {\rm vol}_{M}\\
\geq&\liminf_{j\to +\infty}\int_{M}\left[ \left(\beta_2f_j^2-\beta_2|\nabla f_j|+\frac{m\Lambda_c}{4c(m-1)}\right)|u_j|^2 \right. \\
& \qquad  +\left.\langle u_j, \frac{m}{m-1}\mathscr{R}^{\mathcal{E}\oplus\mathcal{F}}u_j\rangle \right] {\rm vol}_{M}\\
\geq &\liminf_{j\to +\infty}\int_{U}\frac{m\Lambda_c}{4c(m-1)}|u_j|^2+\langle u_j, \frac{m}{m-1}\mathscr{R}^{\mathcal{E}\oplus\mathcal{F}}u_j\rangle {\rm vol}_{M}\\
& \qquad +\liminf_{j\to +\infty}\int_{M\setminus U}C_{\varepsilon}|u_j|^2 {\rm vol}_{M}
\end{aligned}
\end{equation}
and continuing the estimate using Fatou's lemma and strong convergence in $L^2$ leads to
\begin{align*}
\geq & \int_{U}\left(\frac{m\Lambda_c}{4c(m-1)}|u|^2+\langle u, \frac{m}{m-1}\mathscr{R}^{\mathcal{E}\oplus\mathcal{F}}u\rangle\right){\rm vol}_{M} +\int_{M\setminus U} C_{\varepsilon}|u|^2 {\rm vol}_{M}\\
\geq &\int_{U} \frac{m}{4(m-1)}\underbrace{\left(\frac{\Lambda_c}{c}-({\rm scal}_N\circ \Phi)\cdot \|\Lambda^2{\rm d}\Phi\|(p)\right)}_{\geq 0\ \text{by}\ \text{(iv) and}\ \eqref{eq:c-spectral-constant} }|u|^2{\rm vol}_{M} +\int_{M\setminus U} C_{\varepsilon}|u|^2 {\rm vol}_{M}\\
\geq & \int_{\Omega\cap U_{\delta}} \frac{m}{4(m-1)} \left(\frac{\Lambda_c}{c}-C_{\delta}\right)|u|^2{\rm vol}_{M} +\int_{\Omega\setminus U} C_{\varepsilon} |u|^2{\rm vol}_{M}.
\end{align*}
Choosing $\delta$ a sufficiently small $\delta$ such that $C_{\delta}=({\rm scal}_N\circ \Phi)\cdot \|\Lambda^2{\rm d}\Phi\|(p)<\frac{\Lambda_c}{c}$ for all $p\in \Omega \cap U_{\delta}$. We conclude that the last two integrands are positive. Thus we arrive at a contradiction since $\max_{\Omega}|u|=1$. It follows that ${\rm deg}_{\widehat{A}}(\Phi)=0$. This completes the proof of Theorem \ref{thm:main}.
\end{proof}

\subsection{Proof of Theorem \ref{thm:spectral-geodesic-collar-neighborhood}}\label{subsec:Proof-Main-Theorem-2}
After all technical preparations, we are now ready to give a proof of Theorem \ref{thm:spectral-geodesic-collar-neighborhood}.
\begin{proof}[Proof of Theorem \ref{thm:spectral-geodesic-collar-neighborhood}]
For all sufficiently small $d>0$ denoted by $\mathcal{N}_d$ the open geodesic collar neighborhood of $\partial M$ of width $d$. Suppose, by contradiction, that $\mathcal{N}_{\ell}$ exists and there exists an $\varepsilon>0$ such that 
\begin{equation}
\ell \geq \omega:=\pi\sqrt{\frac{c}{\Lambda_c}\left(\frac{(4c-1)m+2-4c}{(4c-1)m+1}\right)}+\varepsilon.
\end{equation}
Fix $\Lambda\in (\omega, \ell)$. Then $K_{\Lambda}:=M\setminus \mathcal{N}_{\Lambda}$ is a compact manifold with boundary. For each $j$, we can define a Lipschitz function $f_j$ as in Equation \eqref{eq:defn-potential}, which satisfy a certain differential inequality and have the property that $f_j\to +\infty$ at $\partial M$ as $j\to +\infty$. Observe that $f_j=0$ on $K_{\Lambda}$. 
Note that the double ${\rm d}M$ has infinite $\widehat{A}$-area if and only if $(M,\partial M)$ has infinite relative $\widehat{A}$-area \cite[Remark~6.6]{cecchini2022scalar}. Then from \cite[Lemma~6.9]{cecchini2022scalar} and \eqref{eq:GGL}, there exists a Gromov-Lawson pair $(\mathcal{E},\mathcal{F})$ and an associated Dirac bundle $S\to M$ such that
\begin{itemize}
\item[(i)] $(\mathcal{E},\mathcal{F})$ and thus $S$ have support $K_{\Lambda}$;
\item[(ii)] $4\|\mathscr{R}^{\mathcal{E}\oplus \mathcal{F}}\|_{\infty}<\frac{\Lambda_c}{c}$ on $K_{\Lambda}^{\circ}$;
\item[(iii)] $\frac{m}{m-1}\|\mathscr{R}^{\mathcal{E}\oplus \mathcal{F}}\|_{\infty}<C_{\varepsilon}$ on $M\setminus K_{\Lambda}^{\circ}$;
\item[(iv)] ${\rm ind}(\mathcal{B}_{f_j,-1})={\rm indrel}(M;\mathcal{E},\mathcal{F})\neq 0$ for any potential $f_j$.
\end{itemize}
By (iv), we have a non-trivial element $u_j\in \ker(\mathcal{B}_{f_j,-1})$. Therefore, we argue as in the proof of Theorem \ref{thm:main}, and then from equation \eqref{eq:estimate-1} we have
\begin{align*}
0 \geq&\liminf_{j\to +\infty}\int_{M}\left(\beta_2f_j^2-\beta_2|\nabla f_j|+\frac{m\Lambda_c}{4c(m-1)}\right)|u_j|^2+\langle u_j, \frac{m}{m-1}\mathscr{R}^{\mathcal{E}\oplus\mathcal{F}}u_j\rangle {\rm vol}_{M}\\
\geq &\liminf_{j\to +\infty}\int_{K_{\Lambda}^{\circ}}\left( \frac{m\Lambda_c}{4c(m-1)}|u_j|^2+\langle u_j, \frac{m}{m-1}\mathscr{R}^{\mathcal{E}\oplus\mathcal{F}}u_j\rangle \right) {\rm vol}_{M}\\
&\qquad +\liminf_{j\to +\infty}\int_{M\setminus K_{\Lambda}^{\circ}}\left( C_{\varepsilon}|u_j|^2+\langle u_j, \frac{m}{m-1}\mathscr{R}^{\mathcal{E}\oplus\mathcal{F}}u_j\rangle\right) {\rm vol}_{M}
\end{align*}
and continuing the estimate using Fatou's lemma and strong convergence in $L^2$ leads to
\begin{align*}
\geq & \int_{K_{\Lambda}^{\circ} }\left(\frac{m\Lambda_c}{4c(m-1)}|u|^2+\langle u, \frac{m}{m-1}\mathscr{R}^{\mathcal{E}\oplus\mathcal{F}}u\rangle\right) {\rm vol}_{M}\\
&\qquad +\int_{M\setminus K_{\Lambda}^{\circ} } \left(C_{\varepsilon}|u|^2+ \langle u, \frac{m}{m-1}\mathscr{R}^{\mathcal{E}\oplus\mathcal{F}}u\rangle \right) {\rm vol}_{M}\\
\geq &\int_{K_{\Lambda}^{\circ} } \frac{m}{m-1}\underbrace{\left(\frac{\Lambda_c}{4c}-\|\mathscr{R}^{\mathcal{E}\oplus\mathcal{F}}\|_{\infty}\right)}_{>0\ \text{by (ii)} }|u|^2 {\rm vol}_{M} \\
& \qquad +\int_{M\setminus K_{\Lambda}^{\circ}} \underbrace{\left(C_{\varepsilon}-\frac{m}{m-1}\|\mathscr{R}^{\mathcal{E}\oplus\mathcal{F}}\|_{\infty}\right)}_{>0 \ \text{by (iii)} }|u|^2 {\rm vol}_{M}\\
\geq & \int_{\Omega\cap K_{\Lambda}^{\circ}} \left(\frac{\Lambda_c}{4c}-\|\mathscr{R}^{\mathcal{E}\oplus\mathcal{F}}\|_{\infty}\right) |u|^2 {\rm vol}_{M} \\
& \qquad +\int_{\Omega\setminus K_{\Lambda}^{\circ}} \left(C_{\varepsilon}-\frac{m}{m-1}\|\mathscr{R}^{\mathcal{E}\oplus\mathcal{F}}\|_{\infty}\right) |u|^2 {\rm vol}_{M}.
\end{align*}
Therefore we obtain a contradiction because $\max_{\Omega}|u|=1$. This finishes the proof of Theorem \ref{thm:spectral-geodesic-collar-neighborhood}.
\end{proof}

\section*{Acknowledgements}
The author would like to thank Prof. Zhenlei Zhang and Prof. Bo Liu for their careful reading, helpful comments, and many encouragements. The author also would like to thank the anonymous referee for valuable suggestions improving the exposition.

\def\cprime{$'$}
\providecommand{\bysame}{\leavevmode\hbox to3em{\hrulefill}\thinspace}
\providecommand{\MR}{\relax\ifhmode\unskip\space\fi MR }
% \MRhref is called by the amsart/book/proc definition of \MR.
\providecommand{\MRhref}[2]{%
  \href{http://www.ams.org/mathscinet-getitem?mr=#1}{#2}
}
\providecommand{\href}[2]{#2}

\end{document}